\long\def\symbolfootnote[#1]#2{\begingroup%
\def\thefootnote{\fnsymbol{footnote}}\footnote[#1]{#2}\endgroup}
\titleformat{\section}{\large\bfseries}{\thesection.}{.5em}{}
\titlespacing*{\section}{0pt}{*3}{*2}
\titleformat{\subsection}{\normalfont\bfseries}{\thesubsection.}{.5em}{}
\titlespacing*{\subsection} {0pt}{*3}{*2}
\titleformat{\subsubsection}{\normalfont\bfseries}{\thesubsubsection.}{.5em}{}
\titlespacing*{\subsubsection} {0pt}{*3}{*2}
\newtheorem{theorem}{Theorem}[section]
\newtheorem{lemma}{Lemma}[section]
\newtheorem{algorithm}{Algorithm}[section]
\newtheorem{problem}{Problem}[section]
\newcommand{\Prob}{\mathbb{P}}
\newcommand{\Expect}{\mathbb{E}}
\newcommand{\indic}{\mathbb{I}}
\long\def\symbolfootnote[#1]#2{\begingroup%
\def\thefootnote{\fnsymbol{footnote}}\footnote[#1]{#2}\endgroup}
\DeclareMathOperator*{\esssup}{ess\,sup}
\newcommand{\CADD}{{\mathsf{CADD}}}
\newcommand{\WADD}{{\mathsf{WADD}}}
\newcommand{\FAR}{{\mathsf{FAR}}}
\newcommand{\PDC}{{\mathsf{PDC}}}
\newcommand{\tauc}{\tau_{\scriptscriptstyle \mathrm{C}}}
\newcommand{\taucc}{\tau_{\scriptscriptstyle \mathrm{CC}}}
\newcommand{\tauw}{\tau_{\scriptscriptstyle \mathrm{W}}}
\newcommand{\tauall}{\tau_{\scriptscriptstyle \mathrm{All}}}
\newcommand{\taudeall}{\tau_{\scriptscriptstyle \mathrm{DE-All}}}
\newcommand{\Pideall}{\Pi_{\scriptscriptstyle \mathrm{DE-All}}}
\newcommand{\Piall}{\Pi_{\scriptscriptstyle \mathrm{All}}}
\newcommand{\nuwl}{\nu_{{\scriptscriptstyle \mathrm{W}}, \ell}}
\newcommand{\nucl}{\nu_{{\scriptscriptstyle \mathrm{C}}, \ell}}
\newcommand{\nurl}{\nu_{{\scriptscriptstyle \mathrm{R}}, \ell}}
\newcommand{\tauwl}{\tau_{{\scriptscriptstyle \mathrm{W}}, \ell}}
\newcommand{\taucl}{\tau_{{\scriptscriptstyle \mathrm{C}}, \ell}}
\newcommand{\taurl}{\tau_{{\scriptscriptstyle \mathrm{R}}, \ell}}
\theoremstyle{plain} %% italic text
\numberwithin{equation}{section} %% double numbering within sections
\begin{document}

\title{\textbf{\Large Data-Efficient Minimax Quickest Change Detection in a Decentralized System}}

\date{}

\maketitle
\vspace{-2cm}
%%%%%%%%% Authors, affiliations %%%%%%%%%%%%%%%%%%%%%%%%%%

\author{
\begin{center}

\textbf{\large Taposh Banerjee and Venugopal. V.\ Veeravalli}

ECE Department and Coordinated Science Laboratory\\
University of Illinois at Urbana-Champaign, USA \\
\end{center}
}

\symbolfootnote[0]{\small Address correspondence to Venugopal V. Veeravalli,
ECE Department, University of Illinois at Urbana-Champaign, 106 Coordinated Science Laboratory,
1308 West Main Street, Urbana, IL 61801, USA; Fax: 217-244-1642 ; E-mail: vvv@illinois.edu.}

{\small \noindent\textbf{Abstract:} A sensor network is considered where a sequence
of random variables is observed at each sensor. At each time
step, a processed version of the observations is transmitted from
the sensors to a common node called the fusion center. At some
unknown point in time the distribution of the observations at all
the sensor nodes changes. The objective is to detect this change
in distribution as quickly as possible, subject to constraints on
the false alarm rate and the cost of observations taken at each
sensor. Minimax problem formulations are proposed for the
above problem. A data-efficient algorithm is proposed in which an adaptive sampling strategy is
used at each sensor to control the cost of observations used before change.
To conserve the cost of communication an occasional binary digit is transmitted 
from each sensor to the fusion center.
It is shown
that the proposed algorithm is globally asymptotically optimal for the proposed formulations, as
the false alarm rate goes to zero.
%Additional decentralized and
%centralized control based algorithms are also proposed. The
%performances of all the proposed algorithms are compared using
%simulations.}
\\ \\
%%%%%%%%% Key words %%%%%%%%%%%%%%%%%%%%%%%%%%
{\small \noindent\textbf{Keywords:} Asymptotic optimality; Minimax; Observation control; Quickest
change detection; Sensor networks.}
\\ \\
%%%%%%%%% Subject Classifications %%%%%%%%%
{\small \noindent\textbf{Subject Classifications:} 62L05; 62L10; 62L15; 62F05; 62F15; 60G40.}

\section{INTRODUCTION} \label{sec:Intro}

In many engineering applications, e.g., surveillance/monitoring
of infrastructure (bridges, historic buildings, etc) or animal/bird habitat
using sensor networks, there is a need to detect a sudden onset of an unusual activity
or abnormal behavior as quickly as possible.
Such an inference task is often performed by employing a sensor network.
In a sensor network multiple geographically distributed sensors are deployed
to observe a phenomenon. The objective is to detect the onset of the activity/behavior
using the sensor nodes in a collaborative fashion.

In this paper, we study the above detection problem in the framework of decentralized
quickest change detection (QCD) introduced in \cite{veer-ieeetit-2001} and further studied in
\cite{mei-ieeetit-2005} and \cite{tart-veer-sqa-2008}. In the model studied in
these papers, the
observations at the sensors are modeled as random variables, and at each time step
a processed version of the observations is transmitted from the sensors to a
common decision node, called the fusion center. At some point in time, called the change point,
the distribution of the random variables observed at \textit{all} the sensors changes. The objective
is to find a stopping time on the information received at the fusion center, so as to
detect the change in distribution as quickly as possible (with minimum possible delay),
subject to a constraint on the false alarm rate. The observations
are independent across the sensors, and independent and identically distributed 
before and after the change point, conditioned on the change
point. The pre- and post-change distributions are assumed to be known.

In many applications of QCD, including those mentioned above,
changes are rare and acquiring data or taking
observations is costly, e.g., the cost of batteries in sensor networks or
the cost of communication between the sensors
and the fusion center. In \cite{veer-ieeetit-2001}, \cite{mei-ieeetit-2005} and \cite{tart-veer-sqa-2008},
the cost of communication is controlled by quantizing or censoring observations/statistic at the
sensors. However, the cost of taking observations at the sensors is not taken into account.
Motivated by this, we study quickest change detection in sensor networks with an additional
constraint on the cost of observations used at each sensor.

One way to detect a change in the sensor network model discussed above is to use the
\textit{Centralized CuSum} algorithm (to be defined below; also see \cite{page-biometrica-1954}).
In this algorithm, all the observations are taken at each sensor,
and raw observations are transmitted from each sensor to the fusion center. At the fusion center
the \textit{CuSum algorithm} (see \cite{page-biometrica-1954}) is applied to all the received observations.
The Centralized CuSum algorithm is globally asymptotically
optimal since the problem is simply that of
detecting a change in a vector sequence of observations, and hence reduces to the classical QCD problem
studied in \cite{lord-amstat-1971}, \cite{poll-astat-1985}, and \cite{lai-ieeetit-1998}.
The problem is more interesting when sending raw observations from the sensors to the fusion center
is not permitted, and at each sensor quantization of observations is enforced.
A major result in this case is due to \cite{mei-ieeetit-2005},  
%In \cite{mei-ieeetit-2005},
%the following ALL scheme is proposed.
in which a CuSum is applied locally at each sensor. A ``1'' is transmitted from
a sensor to a fusion center each time the local CuSum statistic is above a local threshold. A change
is declared at the fusion center when a ``1'' is received from \textit{all} the sensors at the same time.
It is shown in \cite{mei-ieeetit-2005} that the delay of this ALL scheme is asymptotically of the same order as the
delay of the Centralized CuSum scheme (for the same false alarm rate constraint, the ratio of their delay goes to $1$),
as the false alarm rate goes to zero.

However, in applications where changes are rare, the Centralized CuSum algorithm and the ALL algorithm are not
energy efficient. This is because at the sensors, all of the observations are used for decision making,
potentially consuming all the available energy at a sensor before the change even occurs.
In modern sensor networks the most effective way to save energy at a sensor is to switch the sensors
between on and off states, essentially controlling the \textit{duty cycle}. However, introducing
on-off observation control at sensors in an ad-hoc fashion can adversely affect the detection delay.

In \cite{bane-veer-sqa-2012} and \cite{bane-veer-IT-2013} we proposed two-threshold extensions of the classical algorithms
of \cite{shir-siamtpa-1963} and \cite{page-biometrica-1954}, respectively. In these two-threshold algorithms,
an adaptive sampling strategy is used for on-off observation control in their classical counterparts. We showed that
such an observation control can be applied without affecting the first-order asymptotic performance of the classical algorithms.
In this paper we extend the ideas from \cite{bane-veer-IT-2013} by introducing such an adaptive sampling strategy locally at each sensor.

Specifically, in this paper we introduce observation control in the ALL scheme of \cite{mei-ieeetit-2005} by replacing
the CuSum algorithm at each sensor by the DE-CuSum algorithm we proposed in \cite{bane-veer-IT-2013}. We call this new algorithm the DE-All algorithm.
We propose extensions of the data-efficient formulations from \cite{bane-veer-isit-2014}
to sensor networks, and show that the DE-All scheme is globally asymptotically optimal for these formulations.
By global asymptotic optimality we mean that the ratio of the delay the DE-All scheme and
the Centralized CuSum scheme goes to $1$ as the false alarm rate goes to zero.
Thus, one can skip an arbitrary but fixed fraction of samples before change at the sensors, and
transmit just an occasional ``1'' from the sensors to the fusion center, thus conserving significantly the cost of battery,
and yet perform as well (asymptotically up to first order) as the Centralized CuSum algorithm.

\section{Problem Formulation}
\label{sec:ProblemFormulation}

The sensor network is assumed to consist of $L$ sensors and a central decision
maker called the fusion center.
The sensors are indexed by the index $\ell \in \{1, \cdots, L\}$. In the following
we say sensor $\ell$ to refer to the sensor indexed by $\ell$.
At sensor $\ell$ the sequence $\{X_{n,\ell}\}_{n\geq 1}$ is observed, where $n$ is
the time index.
At some unknown time $\gamma$, the distribution of $\{X_{n,\ell}\}$ changes
from $f_{0,\ell}$ to say $f_{1,\ell}$, $\forall \ell$.
The random variables $\{X_{n,\ell}\}$ are independent across indices $n$ and $\ell$
conditioned on $\gamma$.
The distributions $f_{0,\ell}$ and $f_{1,\ell}$ are assumed to be known.

We now discuss the type of policies considered in this paper.
In the quickest change detection models studied in
\cite{veer-ieeetit-2001}, \cite{mei-ieeetit-2005} and \cite{tart-veer-sqa-2008},
observations are taken at each sensor at all times. Here
we consider policies in which on-off observation control is employed at each sensor.
At sensor $\ell$, at each time $n, n\geq 0$,
a decision is made as to whether to \textit{take} or \textit{skip} the observation at time $n+1$ at that sensor.
Let $S_{n,\ell}$ be the indicator random variable such that
\[
S_{n,\ell} = \begin{cases}
1 & \text{~if~} X_{n,\ell} \text{ is used for decision making at sensor $\ell$}\\
0 & \text{ otherwise }.
\end{cases}
\]

Let $\phi_{n, \ell}$ be the observation control law at sensor $\ell$, i.e.,
%\vspace{-0.2cm}
\begin{equation*}
S_{n+1,\ell} = \phi_{n, \ell}(\mathcal{I}_{n,\ell}),
\end{equation*}
%\vspace{-0.2cm}
where
$\mathcal{I}_{n,\ell} = \left[ S_{1,\ell}, \ldots, S_{n,\ell}, X_{1,\ell}^{(S_{1,\ell})}, \ldots, X_{n,\ell}^{(S_{n,\ell})} \right]$.
%\end{equation*}
Here, $X_{n,\ell}^{(S_{n,\ell})} = X_{1,\ell}$ if $S_{1,\ell}=1$, otherwise $X_{1,\ell}$ is absent from the
information vector $\mathcal{I}_{n,\ell} $.
%\vspace{-0.1cm}
Thus, the decision to take or skip a sample at sensor $\ell$ is based on its past
information.
Let
%\vspace{-0.1cm}
\begin{equation*}
Y_{n,\ell} = g_{n,\ell}(\mathcal{I}_{n,\ell})
\end{equation*}
%\vspace{-0.1cm}
be the information transmitted from sensor $\ell$ to the fusion center. If no information
is transmitted to the fusion center, then $Y_{n,\ell}=\text{NULL}$, which is treated as zero
at the fusion center. Here,
$g_{n,\ell}$ is the transmission control law at sensor $\ell$.
Let
\[\boldsymbol{Y}_n = \{Y_{n,1}, \cdots, Y_{n,L}\}\]
be the information received at the fusion center at time $n$,
and let $\tau$ be a
stopping time on the sequence $\{\boldsymbol{Y}_n\}$.

Let
\[\boldsymbol{\phi_n}=\{\phi_{n, 1}, \cdots, \phi_{n, L}\}\]
denote the observation control law at time $n$, and let
\[\boldsymbol{g}_n=\{g_{n, 1}, \cdots, g_{n, L}\}\]
denote the transmission control law at time $n$.
For data-efficient quickest change detection in sensor networks
we consider the policy of type $\Pi$
defined as
\[\Pi = \{\tau, \{\boldsymbol{\phi}_0, \cdots, \boldsymbol{\phi}_{\tau-1}\}, \{\boldsymbol{g}_1, \cdots, \boldsymbol{g}_\tau\}\}.\]

To capture the cost of observations used at each sensor before change,
we use the following Pre-Change Duty Cycle ($\PDC$) metric proposed in \cite{bane-veer-isit-2014}.
The $\PDC_\ell$, the $\PDC$ for sensor $\ell$, is defined as
\begin{equation}
\label{eq:PDC_l}
\PDC_\ell(\Pi) = \limsup_{\gamma \to \infty}  \frac{1}{\gamma} \Expect_\infty \left[\sum_{k=1}^{\gamma-1} S_{k,\ell}\right].
\end{equation}
Thus, $\PDC_\ell$ is the fraction of time observations are taken before change at sensor $\ell$.
%We note that the $\PDC$ at sensor $\ell$, i.e., $\PDC_\ell$ is a function of the observations taken at other
%sensors as well, through the stopping time $\tau$.
If all the observations are used at sensor $\ell$, then $\PDC_\ell =1$.
If every second sample is skipped at sensor $\ell$, then $\PDC_\ell=0.5$.

We now propose data-efficient extensions of problems from \cite{bane-veer-isit-2014}
for sensor networks. In \cite{bane-veer-isit-2014} we considered extensions of two popular
minimax formulations: one due to \cite{lord-amstat-1971} and another due to \cite{poll-astat-1985}.
Let
\[
\boldsymbol{\mathcal{I}}_n=\{\mathcal{I}_{n, 1}, \cdots, \mathcal{I}_{n, L}\}
\]
be the information available at time $n$ across the sensor network.
We first consider the delay and false alarm metrics used in \cite{lord-amstat-1971}:
the Worst case Average Detection Delay ($\WADD$)
\begin{equation}\label{eq:WADD_DistSys_Def}
\WADD(\Pi) = \sup_{\gamma \geq 1}  \esssup\; \Expect_\gamma \left[ (\tau-\gamma)^+ | \boldsymbol{\mathcal{I}}_{\gamma-1} \right].
\end{equation}
and the False Alarm Rate ($\FAR$)
\begin{equation}%\label{eq:FAR_Def}
\FAR(\Pi) = 1/\Expect_\infty \left[ \tau\right].
\end{equation}
The first data-efficient formulation for sensor network that we consider in this paper is
\begin{problem}\label{prob:DE-CENSORLorden}%\vspace{-0.1cm}
\begin{eqnarray}
\underset{\Pi}{\text{minimize}} && \WADD(\Pi),\nonumber \\%\vspace{-0.1cm}
\text{subject to } && \FAR(\Pi) \leq \alpha, \\
\text{           } &&\PDC_\ell(\Pi) \leq \beta_\ell, \; \mbox{ for } \; \ell=1,\cdots,L. \nonumber
\end{eqnarray}
Here, $0 \leq \alpha, \beta_\ell \leq 1$, for $\ell=1,\cdots,L$, are given constraints.
\end{problem}

We also consider the formulation where instead of $\WADD$, the $\CADD$ metric
\begin{equation}\label{eq:CADD_Sensor_Def}
\CADD(\Pi) = \sup_\gamma \ \ \Expect_\gamma \left[ \tau-\gamma | \tau \geq \gamma \right]
\end{equation}
is used:
\begin{problem}\label{prob:DE-CENSORPollak}%\vspace{-0.1cm}
\begin{eqnarray}
\underset{\Pi}{\text{minimize}} && \CADD(\Pi),\nonumber \\%\vspace{-0.1cm}
\text{subject to } && \FAR(\Pi) \leq \alpha, \\
\text{           } &&\PDC_\ell(\Pi) \leq \beta_\ell, \; \mbox{ for } \; \ell=1,\cdots,L. \nonumber
\end{eqnarray}
Here, $0 \leq \alpha, \beta_\ell \leq 1$, for $\ell=1,\cdots,L$, are given constraints.
\end{problem}

In \cite{lai-ieeetit-1998} an asymptotic lower bound on the $\CADD$ of any stopping rule satisfying
an $\FAR$ constraint of $\alpha$ is obtained. This bound when specialized to sensor networks is provided
below.
Let
\[\Delta_\alpha = \{\Pi: \FAR(\Pi) \leq \alpha\}.\]
\begin{theorem}[\cite{lai-ieeetit-1998}]\label{thm:Sensor_LB}
As $\alpha \to 0$,
\begin{equation}\label{eq:LBCADD}
\inf_{\Pi \in \Delta_\alpha} \CADD(\Pi) \geq \frac{|\log \alpha|}{\sum_{\ell=1}^L D(f_{1,\ell} \; || \; f_{0,\ell})} (1+o(1)).
\end{equation}
\end{theorem}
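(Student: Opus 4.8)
The plan is to reduce the decentralized problem to a classical centralized one and then invoke the i.i.d.\ information bound. First I would observe that $\tau$ is a stopping time with respect to the fusion-center filtration $\mathcal{G}_n=\sigma(\boldsymbol{Y}_1,\dots,\boldsymbol{Y}_n)$, and that each $\boldsymbol{Y}_n=\boldsymbol{g}_n(\boldsymbol{\mathcal{I}}_n)$, together with the controls $\boldsymbol{\phi}_n$, is a measurable function of the raw observations $\{X_{k,\ell}: k\le n,\ \ell\le L\}$. Hence $\mathcal{G}_n\subseteq\mathcal{F}_n:=\sigma(X_{k,\ell}: k\le n,\ \ell\le L)$, so $\tau$ is also an $\mathcal{F}$-stopping time, and the functionals $\Expect_\infty[\tau]$ and $\Expect_\gamma[\tau-\gamma\mid\tau\ge\gamma]$ depend only on the law of $\tau$, which is unchanged by enlarging the filtration. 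It therefore suffices to prove the bound over all $\mathcal{F}$-stopping times with $\Expect_\infty[\tau]\ge 1/\alpha$, i.e., in the full-information (centralized) setting, where no policy can do better. Because the observations are independent across $\ell$, the per-sample Kullback--Leibler information of the vector sequence $\boldsymbol{X}_n=(X_{n,1},\dots,X_{n,L})$ is additive, equal to $\sum_{\ell=1}^L D(f_{1,\ell}\,\|\,f_{0,\ell})=:I$, so the statement becomes exactly the classical bound of \cite{lai-ieeetit-1998} for an i.i.d.\ sequence whose log-likelihood-ratio increments have mean $I$ under $\Prob_\gamma$.

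For the centralized bound I would use the standard change-of-measure argument. Writing $\Lambda_\gamma^{n}=\sum_{j=\gamma}^{n}\sum_{\ell=1}^L\log\frac{f_{1,\ell}(X_{j,\ell})}{f_{0,\ell}(X_{j,\ell})}$ for the cumulative post-change LLR, the identity $\frac{d\Prob_\gamma}{d\Prob_\infty}\big|_{\mathcal{F}_n}=e^{\Lambda_\gamma^{n}}$ holds for $n\ge\gamma$. Fix $\epp>0$ and set $m_\alpha=(1-\epp)|\log\alpha|/I$. The goal is to show $\Prob_\gamma(\gamma\le\tau<\gamma+m_\alpha\mid\tau\ge\gamma)\to 0$, after which Markov's inequality gives $\Expect_\gamma[\tau-\gamma\mid\tau\ge\gamma]\ge m_\alpha\,\Prob_\gamma(\tau-\gamma\ge m_\alpha\mid\tau\ge\gamma)=(1-\epp)\frac{|\log\alpha|}{I}(1-o(1))$, and letting $\epp\downarrow 0$ yields the theorem. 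To bound the conditional probability I would split according to whether the LLR overshoots the level $b_\alpha=(1+\delta)m_\alpha I$: on the event $\{\max_{\gamma\le n<\gamma+m_\alpha}\Lambda_\gamma^{n}\ge b_\alpha\}$ the strong law of large numbers (the walk $\Lambda_\gamma^{\gamma+k}$ has $\Prob_\gamma$-drift $I$, under the standard integrability of the increments) makes the probability vanish as $\alpha\to 0$, uniformly in $\gamma$ by stationarity; on its complement, change of measure gives $\Prob_\gamma(\gamma\le\tau<\gamma+m_\alpha,\ \Lambda_\gamma^{\tau}\le b_\alpha)\le e^{b_\alpha}\Prob_\infty(\gamma\le\tau<\gamma+m_\alpha)$, where $e^{b_\alpha}=\alpha^{-(1-\epp)(1+\delta)}$ can be made $\alpha^{-(1-\epp')}$ by choosing $\delta$ small relative to $\epp$.

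I would then bring in the false-alarm constraint. Using $\Prob_\gamma(\tau\ge\gamma)=\Prob_\infty(\tau\ge\gamma)$ (the event depends only on pre-$\gamma$ observations, whose law is common to both measures) together with the elementary bound $\sum_{\gamma\ge 1}\Prob_\infty(\gamma\le\tau<\gamma+m_\alpha)\le m_\alpha$, one aggregates the per-$\gamma$ estimates against $\sum_{\gamma\ge 1}\Prob_\infty(\tau\ge\gamma)=\Expect_\infty[\tau]\ge 1/\alpha$ to conclude that the conditional early-stopping probability cannot stay bounded away from $0$ for the worst-case $\gamma$ in the supremum defining $\CADD$.

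The hard part will be exactly this last step: converting the pointwise-in-$\gamma$ inequalities into a statement about $\sup_\gamma$ under nothing more than the mean constraint $\Expect_\infty[\tau]\ge 1/\alpha$, since a mean constraint alone does not control the survival probabilities $\Prob_\infty(\tau\ge\gamma)$ nor prevent $\tau$ from placing mass at small values under $\Prob_\infty$. This is the delicate point in \cite{lai-ieeetit-1998}, handled by summing the estimates over a suitably truncated range of $\gamma$ and exploiting that the overshoot probability tends to $0$; by contrast the analogous bound for $\WADD$ is immediate, since taking $\gamma=1$ already gives $\WADD(\Pi)\ge\Expect_1[\tau]-1$ and removes the conditioning entirely. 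I would therefore lean on \cite{lai-ieeetit-1998} for this technical core and reserve the genuinely new work for making the reduction in the first paragraph rigorous for the data-efficient, censored policies of type $\Pi$ considered here.
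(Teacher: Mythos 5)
Your proposal is correct and follows essentially the same route as the paper: Theorem~\ref{thm:Sensor_LB} is stated there without proof, as a direct specialization of the information bound of \cite{lai-ieeetit-1998}, and that specialization rests on exactly the two points you make explicit --- every policy of type $\Pi$ induces a stopping time adapted to the full observation filtration $\sigma(X_{k,\ell}:k\le n,\ \ell\le L)$, and independence across sensors makes the per-sample Kullback--Leibler number additive, equal to $\sum_{\ell=1}^L D(f_{1,\ell}\,\|\,f_{0,\ell})$. Your sketch of the change-of-measure argument and your identification of the $\sup_\gamma$ aggregation as the delicate step are accurate, but since you (rightly) defer that technical core to \cite{lai-ieeetit-1998}, your argument coincides with the paper's own treatment.
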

\medskip
Since $\WADD(\Pi) \geq \CADD(\Pi)$, we also have as $\alpha \to 0$,
\begin{equation}\label{eq:LBWADD}
\inf_{\Pi \in \Delta_\alpha} \WADD(\Pi) \geq \frac{|\log \alpha|}{\sum_{\ell=1}^L D(f_{1,\ell} \; || \; f_{0,\ell})} (1+o(1)).
\end{equation}
We note that the lower bound on the $\WADD$ was first obtained in \cite{lord-amstat-1971}.

It is well known that the Centralized CuSum algorithm achieves the lower bound in Theorem~\ref{thm:Sensor_LB} as $\alpha \to 0$; see \cite{lai-ieeetit-1998}.
See the next section for a precise statement. However, in this algorithm raw observations are transmitted from the sensors
to the fusion center at all times. A more communication efficient scheme is the ALL scheme of \cite{mei-ieeetit-2005}, where
information an occasional ``1'' is transmitted from the sensors to the fusion center.
As shown in \cite{mei-ieeetit-2005}, the ALL scheme also achieves the lower bound provided in Theorem~\ref{thm:Sensor_LB}, as $\alpha \to 0$.
However, in the ALL scheme, observations are used at each sensor all the time. We are interested in schemes
that are also data-efficient locally at each sensor.

We will be particularly interested in policies such that the information transmitted from the
sensors to the fusion center at any time is a binary digit.
That is we are primarily interested in policies in the class
\begin{equation}\label{def:BinaryQuantization}
\Delta_{(\alpha,\beta)}^{\{0,1\}} = \{\Pi: \FAR(\Pi) \leq \alpha; \; \PDC_\ell \leq \beta_\ell \; \mbox{ and }\; Y_{n,\ell} \in \{0, 1\}, \; \forall n,\ell\}.
\end{equation}
The interest in the above policies, where only a binary number is sent to the fusion center, stems from
the fact that in these policies the information transmitted to the fusion center is the minimal.
Thus, it represents in some sense the maximum possible compression of the transmitted information.
The main objective of this paper is to show that an algorithm from this class can be globally
asymptotically optimal.

Specifically, we will propose an algorithm, called the DE-All algorithm, from the class $\Delta_{(\alpha,\beta)}^{\{0,1\}}$,
and show that it is asymptotically optimal for both Problem~\ref{prob:DE-CENSORLorden} and Problem~\ref{prob:DE-CENSORPollak},
i.e., the performance of the DE-All algorithm achieves the lower bound of \cite{lai-ieeetit-1998} given in Theorem~\ref{thm:Sensor_LB} above,
for each fixed set of $\{\beta_\ell\}$, as $\alpha \to 0$.

\section{Quickest Change Detection in Sensor Networks: Existing Literature}
In this section we provide a brief overview of the existing literature relevant to this chapter.

We first describe the Centralized CuSum algorithm in a mathematically precise way.
\begin{algorithm}[Centralized CuSum]\label{algo:CentCuSum}
Fix a threshold $A\geq 0$.
\begin{enumerate}
\item Use all the observations at the sensors, i.e.,
\[
S_{n,\ell}=1, \; \forall n,\ell.
\]
\item Raw observations are transmitted from the sensors
to the fusion center at each time step, i.e.,
\[
Y_{n,\ell}=X_{n,\ell}\; \forall n,\ell.
\]
\item The CuSum algorithm (see \cite{page-biometrica-1954}) is applied to the
vector of observations received at the fusion center.
That is, at the fusion center, the sequence $\{V_n\}$  is computed
according to the following recursion: $V_0=0$, and for $n\geq 0$,
\begin{equation}
\label{eq:CentCUSUM}
V_{n+1} = \max \left\{0, \; V_n + \sum_{\ell=1}^L \log \frac{f_{1,\ell}(X_{n+1,\ell})}{f_{0,\ell}(X_{n+1,\ell})}\right\}.
\end{equation}
%Here, $L_\ell(X)=\frac{f_{1,\ell}(X)}{f_{0,\ell}(X)}$.
A change is declared
the first time $V_n$ is above a threshold $A>0$:
\[
\taucc = \inf\left\{ n\geq 1: V_n > A \right\}.
\]
\end{enumerate}
\end{algorithm}

It is well known from \cite{lord-amstat-1971} that the performance of the
Centralized CuSum algorithm is asymptotically equal to the
lower bound provided in Theorem~\ref{thm:Sensor_LB}.
\begin{theorem}[\cite{lord-amstat-1971}]\label{thm:CentCuSumOpt}
If $\Expect_1[\log f_{1,\ell}(X_{1,\ell})/f_{0,\ell}(X_{1,\ell})]$ is finite and positive for each $\ell$.
Then with $A = |\log \alpha|$, we have
\begin{equation}\label{eqn:CentCuSum_Opt}
\begin{split}
\FAR(\taucc) &\leq \alpha, \\
\WADD(\taucc) &\leq \frac{|\log \alpha|}{\sum_{\ell=1}^L D(f_{1,\ell} \; || \; f_{0,\ell})} (1+o(1)) \mbox{ as } \alpha \to 0.
\end{split}
\end{equation}
\end{theorem}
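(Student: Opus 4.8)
The plan is to reduce the $L$-sensor problem to a classical single-sequence quickest change detection problem and then invoke the known optimality of the CuSum procedure. First I would observe that, because the observations are independent across sensors, the network can be viewed as producing a single vector-valued observation $\mathbf{X}_n = (X_{n,1},\ldots,X_{n,L})$ at each time $n$, with pre-change density $\prod_{\ell=1}^L f_{0,\ell}$ and post-change density $\prod_{\ell=1}^L f_{1,\ell}$. The per-sample log-likelihood ratio of $\mathbf{X}_n$ is exactly $Z_n = \sum_{\ell=1}^L \log\!\big(f_{1,\ell}(X_{n,\ell})/f_{0,\ell}(X_{n,\ell})\big)$, so the recursion \eqref{eq:CentCUSUM} is precisely the standard scalar CuSum recursion applied to $\{Z_n\}$, and $\taucc$ is the corresponding CuSum stopping time at level $A$. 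By additivity of the Kullback--Leibler divergence over product measures, the post-change mean increment is $\Expect_1[Z_1] = \sum_{\ell=1}^L D(f_{1,\ell} \; || \; f_{0,\ell}) =: D_{\tot}$, which is finite and positive under the hypothesis. This reduces the claim to the one-dimensional statement.

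For the false alarm bound I would use the standard martingale/optional-stopping argument for CuSum: under $\Prob_\infty$ the exponentiated random walk $\{e^{\sum_{i=1}^n Z_i}\}$ is a mean-one martingale, and the usual decomposition of the CuSum path into SPRT-like cycles yields $\Expect_\infty[\taucc] \ge e^A$. Taking $A = |\log\alpha|$ gives $\Expect_\infty[\taucc] \ge 1/\alpha$, hence $\FAR(\taucc) = 1/\Expect_\infty[\taucc] \le \alpha$.

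For the delay bound I would invoke Lorden's characterization that, for the CuSum rule, the worst case over $\gamma$ and over the conditioning history is dominated by taking the change at the start: one has $(\taucc - \gamma)^+ \le T_A^{(\gamma)}$, where $T_A^{(\gamma)}$ is the first passage time of the SPRT-type walk started at time $\gamma$, and under $\Prob_\gamma$ this is distributed as $T_A = \inf\{n\ge 1 : \sum_{i=1}^n Z_i \ge A\}$ under $\Prob_1$, independently of $\boldsymbol{\mathcal{I}}_{\gamma-1}$. Thus $\WADD(\taucc) \le \Expect_1[T_A]$. Since the walk has positive drift $D_{\tot}$, Wald's identity gives $\Expect_1[T_A] = (A + \Expect_1[\text{overshoot}])/D_{\tot}$, and bounding the expected overshoot by a constant via renewal theory yields $\Expect_1[T_A] = A/D_{\tot} + O(1) = (A/D_{\tot})(1+o(1))$ as $A\to\infty$. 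Substituting $A = |\log\alpha|$ gives the stated bound.

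The main technical point is the delay step: rigorously controlling the overshoot $\sum_{i=1}^{T_A} Z_i - A$ (to secure the $(1+o(1))$ factor) and justifying that the worst-case conditioning does not inflate the delay beyond $\Expect_1[T_A]$. Both are handled by the classical arguments of \cite{lord-amstat-1971}; since the theorem is quoted from that reference, the genuinely new content here is only the exact reduction to a scalar CuSum whose mean increment is the additive quantity $D_{\tot} = \sum_{\ell=1}^L D(f_{1,\ell} \; || \; f_{0,\ell})$.
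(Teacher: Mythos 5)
Your proposal is correct and matches the paper's (implicit) treatment: the paper states this result without proof, citing Lorden (1971) after making exactly your reduction --- the network is a single vector-valued observation sequence whose per-sample log-likelihood ratio is $\sum_{\ell=1}^L \log\big(f_{1,\ell}(X_{n,\ell})/f_{0,\ell}(X_{n,\ell})\big)$ with additive Kullback--Leibler drift --- so the classical scalar CuSum bounds (geometric-cycle argument for $\Expect_\infty[\taucc]\ge e^A$, and domination of the worst-case delay by the one-sided first-passage time $T_A$) apply verbatim. One minor caveat: bounding the expected overshoot by an $O(1)$ constant requires a finite second moment, but under the stated first-moment hypothesis the elementary renewal theorem still yields $\Expect_1[T_A]=\big(A/\sum_{\ell=1}^L D(f_{1,\ell}\,||\,f_{0,\ell})\big)(1+o(1))$, which is all the theorem claims.
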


We now describe the ALL algorithm from \cite{mei-ieeetit-2005}.
Let \[d_\ell = \frac{D(f_{1,\ell} \; || \; f_{0,\ell})}{\sum_{k=1}^L D(f_{1,k} \; || \; f_{0,k})}.\]
\begin{algorithm}[ALL]\label{algo:ALL}
Start with $C_{0,\ell}=0$, $\forall \ell$, and fix $A\geq 0$.
\begin{enumerate}
\item At each sensor $\ell$ the CuSum statistic is computed over time:
\[
C_{n+1,\ell} = \max \left\{0, \; C_{n,\ell} + \log \frac{f_{1,\ell}(X_{n+1,\ell})}{f_{0,\ell}(X_{n+1,\ell})}\right\}.
\]
Thus $S_{n,\ell}=1$ $\forall n, \ell$.
\item A ``1'' is transmitted from sensor $\ell$ to the fusion center
if the CuSum statistic is above a threshold $d_\ell A$, i.e,
\[
Y_{n,\ell} = \indic_{\{C_{n,\ell} > d_\ell A\}}.
\]
\item A change is declared when a ``1'' is transmitted from all the sensors at the same time, i.e.,
\[
\tauall = \inf\{n \geq 1: Y_{n,\ell} = 1 \; \forall \ell\}.
\]
\end{enumerate}
\end{algorithm}

The ALL algorithm has a surprising optimality property proved in \cite{mei-ieeetit-2005}.
\begin{theorem}[\cite{mei-ieeetit-2005}]\label{thm:AllOpt}
If the absolute moments up to the third order of $\log f_{1,\ell}(X_{1,\ell})/f_{0,\ell}(X_{1,\ell})$ are finite and positive under $\Prob_1$.
Then with $A = |\log \alpha|$, we have as $\alpha \to 0$.
\begin{equation}\label{eqn:ALL_Opt}
\begin{split}
\FAR(\tauall) &\leq \alpha (1+o(1)), \\
\WADD(\tauall) &\leq \frac{|\log \alpha|}{\sum_{\ell=1}^L D(f_{1,\ell} \; || \; f_{0,\ell})} (1+o(1)).
\end{split}
\end{equation}
\end{theorem}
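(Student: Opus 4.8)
The plan is to treat the false-alarm and delay claims separately, exploiting two structural features of the ALL rule: the sensors are statistically independent, and the local thresholds $d_\ell A$ are chosen exactly so that every local CuSum reaches its threshold at the same first-order time. Writing $D_\ell = D(f_{1,\ell}\,||\,f_{0,\ell})$, $D_{\tot} = \sum_{k=1}^L D_k$, and $Z_{n,\ell} = \log\frac{f_{1,\ell}(X_{n,\ell})}{f_{0,\ell}(X_{n,\ell})}$, the post-change drift of $C_{n,\ell}$ is $D_\ell$, so sensor $\ell$ needs roughly $d_\ell A / D_\ell = A/D_{\tot}$ samples to cross $d_\ell A$, a quantity independent of $\ell$. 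This common crossing time is what will yield both the matching delay and the simultaneous firing required by $\tauall$.

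For the false-alarm bound I would first recall the single-sensor estimate: under $\Prob_\infty$ the CuSum $C_{n,\ell}$ has mean time to cross a level $b$ at least $e^{b}$, a classical bound obtained from the SPRT/likelihood-ratio decomposition of the excursions of $C_{n,\ell}$ (using $\Expect_\infty[e^{Z_{n,\ell}}] = 1$ and a geometric count of excursions, each succeeding with probability at most $e^{-b}$). The heart of the argument is then to convert these per-sensor recurrence times into a bound on the time until all sensors are above threshold at once. Because the sensors are independent, the instantaneous joint-crossing probability factorizes, $\Prob_\infty(C_{n,1} > d_1 A, \ldots, C_{n,L} > d_L A) = \prod_\ell \Prob_\infty(C_{n,\ell} > d_\ell A)$, and each factor is of order $e^{-d_\ell A}$; multiplying gives an instantaneous joint probability of order $e^{-\sum_\ell d_\ell A} = e^{-A}$. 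Turning this renewal/ergodic heuristic into a rigorous lower bound $\Expect_\infty[\tauall] \ge e^{A}(1+o(1)) = \alpha^{-1}(1+o(1))$ — and hence $\FAR(\tauall) \le \alpha(1+o(1))$ — is the delicate step, requiring a careful renewal decomposition of the above-threshold excursions of each local statistic combined with the cross-sensor independence.

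For the delay bound I would use a worst-case (Lorden-type) argument. Conditioned on the change and on any past history $\boldsymbol{\mathcal{I}}_{\gamma-1}$, each local statistic dominates a CuSum restarted from zero, so by the strong law and nonlinear renewal theory sensor $\ell$ crosses $d_\ell A$ within $A/D_{\tot}\,(1+o(1))$ steps. Since after crossing the positive drift $D_\ell$ keeps $C_{n,\ell}$ above $d_\ell A$ except on events of vanishing probability, within an additional $o(A)$ window all $L$ statistics are simultaneously above their thresholds, every sensor sends a ``1'' at the same instant, and $\tauall$ fires. This gives $\WADD(\tauall) \le (A/D_{\tot})(1+o(1)) = |\log\alpha|/D_{\tot}\,(1+o(1))$ on substituting $A = |\log\alpha|$; the finiteness of the absolute moments up to third order is what controls the overshoot distributions and the fluctuation events, ensuring that the extra delay incurred in waiting for all sensors to be jointly above threshold is only $o(A)$.

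I expect the false-alarm analysis to be the main obstacle. The difficulty is precisely the ``at the same time'' requirement in the definition of $\tauall$: each local statistic crosses $d_\ell A$ frequently, on the much shorter time scale $e^{d_\ell A}$, and one must show that these frequent but asynchronous crossings rarely align across all $L$ sensors simultaneously. Promoting the factorized instantaneous estimate to a genuine lower bound on $\Expect_\infty[\tauall]$, rather than merely on a stationary probability, is where the independence across sensors must be carefully coupled with a renewal argument for the above-threshold excursions, and is the technically hardest part of the proof.
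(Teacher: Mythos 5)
First, note that the paper does not actually prove Theorem~\ref{thm:AllOpt}: it is quoted from \cite{mei-ieeetit-2005}, and the only proof material in the paper reflecting its structure is the proof of Theorem~\ref{thm:DEALL}, which is stated to follow the outline of Mei's Theorem~3. Measured against that outline, your delay argument is essentially the right one: bound the worst-case conditional delay by the time for every local CuSum, restarted from zero, to reach $d_\ell A$, plus a correction for the requirement that all sensors be above threshold \emph{simultaneously}. Two refinements are needed to make it precise. The quantity that must equal $A/\sum_{\ell} D(f_{1,\ell}\,||\,f_{0,\ell}) + O(\sqrt{A})$ is $\Expect_1\bigl[\max_{1\le\ell\le L}\taurl(0,d_\ell A)\bigr]$, the expectation of the \emph{maximum} of the $L$ first-passage times, not the maximum of the expectations; this is exactly the estimate the paper imports from Mei's proof, and it is where the higher-order moment conditions enter (through variance control of each $\taurl$). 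Also, ``the drift keeps $C_{n,\ell}$ above $d_\ell A$ except on events of vanishing probability'' is not quite right --- each statistic dips back below its threshold with probability bounded away from zero; the correct object is the last exit time below $d_\ell A$ after the first crossing, whose \emph{expectation} is bounded by a constant (this is precisely the role played by Lemma~\ref{lem:lem1} in the DE-All proof). Neither repair changes your architecture.

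The genuine gap is in the $\FAR$ half, and you have flagged it yourself without closing it. The instantaneous factorized bound $\Prob_\infty(C_{n,1}>d_1A,\dots,C_{n,L}>d_LA)\le\prod_{\ell} e^{-d_\ell A}=e^{-A}$ is correct, but the only conversion you indicate --- summing over $n$ to get $\Prob_\infty(\tauall\le m)\le m\,e^{-A}$ --- yields only $\Expect_\infty[\tauall]\ge \tfrac12 e^{A}(1+o(1))$, i.e.\ $\FAR(\tauall)\le 2\alpha$, which is $\alpha\cdot O(1)$ rather than the claimed $\alpha(1+o(1))$. (For the downstream first-order optimality a constant factor is harmless, since it shifts the threshold by an additive $O(1)$, but it does not establish the bound as stated.) Recovering the constant requires the renewal/excursion analysis of the above-threshold sojourns that you correctly identify as the hard step but do not supply; as written, the false-alarm claim is asserted rather than proved.
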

Thus, the ALL scheme achieves the asymptotic lower bound in Theorem~\ref{thm:Sensor_LB}, which is also
the performance of the Centralized CuSum algorithm. In this sense, the ALL scheme is
globally asymptotically optimal as the false alarm rate goes to zero. It is important to note
that such an optimality is obtained by sending such a minimal amount of information (binary digits)
from the sensors to the fusion center.

However, we note that $\PDC_\ell=1$, $\forall \ell$, for both the Centralized CuSum algorithm and the ALL algorithm.
Hence, neither the Centralized CuSum algorithm nor the ALL algorithm are asymptotically optimal for
Problem~\ref{prob:DE-CENSORLorden} and Problem~\ref{prob:DE-CENSORPollak}, when
$\beta_\ell<1$, for any $\ell$.

Consider a policy in which, at each sensor every $n^{th}$ sample is used, and raw observations
are transmitted from each sensor to the fusion center, each time an observation is taken.
At the fusion center, the CuSum algorithm, as defined above, is applied to the received samples.
In this policy, the $\PDC_\ell$ achieved is equal to $1/n$, $\forall \ell$.
Using this scheme, any given constraints on
the $\PDC_\ell$ can be achieved by using every $n^{th}$ sample,
and by choosing a suitably large $n$.
However, the detection delay for this scheme would be approximately $n$ times
that of the delay for the Centralized CuSum algorithm, for the same false
alarm rate.
%Thus, in order to achieve optimal observation control in sensor networks, a more
%effective scheme is needed.

%Motivated by the results on the DE-CuSum algorithm from the previous chapters
%it is interesting to ask if the global asymptotic optimality of the ALL scheme can be retained
%if we replace the CuSum algorithm at each sensor by the DE-CuSum algorithm.
%We will show below that such an optimality result is indeed true.
%Specifically, we will propose an algorithm called the DE-All based on the DE-CuSum algorithm (see Algorithm~\ref{algo:DECuSum})
% and show that the proposed algorithm can be designed to satisfy any $\FAR$ constraint of $\alpha$, and
%$\PDC_\ell$ constraints of $\{\beta_\ell\}$. The DE-All algorithm
%belong to the class $\Delta_{(\alpha,\beta)}^{\{0,1\}}$.
%Also, the $\WADD$, and hence the $\CADD$, of these algorithms equals the lower bound in
%Theorem~\ref{thm:Sensor_LB}, as $\alpha \to 0$.

\section{The $\text{DE-CuSum}$ Algorithm: Adaptive Sampling Based Extension of the $\text{CuSum}$ Algorithm}
In this section we review the DE-CuSum algorithm we proposed in \cite{bane-veer-IT-2013}.

Suppose we have a single sensor system. By a single sensor system we mean $L=1$, i.e.,
there is only a single sensor, and the fusion center has access to all the information available at that sensor.
This is of course equivalent to having no fusion center and assuming that the stopping decision is made at the only sensor in the system.
Since, there is only one sensor, we can suppress the subscript $\ell$. We thus have the following model.

We have a single sequence of random variables $\{X_n\}$. The random variables are i.i.d. with p.d.f. $f_0$ before change,
and are i.i.d. with p.d.f. $f_1$ after the change point $\gamma$. For a single sensor system we consider the following policies.
Let $S_n$ be the indicator random variable such that
\begin{equation*}
\begin{split}
               S_{n} & =  \begin{cases}
               1 & \mbox{ if } X_n \mbox{ used for decision making }\\
               0 & \mbox{ otherwise. }
               \end{cases}
               \end{split}
\end{equation*}
The information available at time $n$ is denote by
\[\mathcal{I}_n = \{X_1^{(S_1)}, \cdots, X_n^{(S_n)}\},\]
where $X_k^{(S_k)}=X_k$ if $S_k=1$, else $X_k$ is absent from $\mathcal{I}_n$,
and
\[S_n = \phi_n (\mathcal{I}_{n-1}).\]
Here, $\phi_n$ denotes the control map.
Let $\tau$ be a stopping time for the sequence $\{\mathcal{I}_n\}$. A control policy is the collection
\[\Psi = \{\tau, \phi_1, \cdots, \phi_{\tau}\}.\]

We define 
\begin{equation}
\label{eq:PDC}
\PDC(\Psi) = \limsup_{\gamma \to \infty}  \frac{1}{\gamma} \Expect_\infty \left[\sum_{k=1}^{\gamma-1} S_{k}\right].
\end{equation}
For a single sensor system, we are interested in the following problems:
\begin{problem}\label{prob:DELorden}%\vspace{-0.1cm}
\begin{eqnarray}
\underset{\Psi}{\text{minimize}} && \WADD(\Psi),\nonumber \\%\vspace{-0.1cm}
\text{subject to } && \FAR(\Psi) \leq \alpha, \\
\text{           } &&\PDC(\Psi) \leq \beta, \nonumber
\end{eqnarray}
where $0 \leq \alpha, \beta \leq 1$ are given constraints;
\end{problem}
\begin{problem}\label{prob:DEPollak}%\vspace{-0.1cm}
\begin{eqnarray}
\underset{\Psi}{\text{minimize}} && \CADD(\Psi),\nonumber \\%\vspace{-0.1cm}
\text{subject to } && \FAR(\Psi) \leq \alpha, \\
\text{           } &&\PDC(\Psi) \leq \beta, \nonumber
\end{eqnarray}
where $0 \leq \alpha, \beta \leq 1$ are given constraints.
\end{problem}

Recall that in the CuSum algorithm, the CuSum statistics $\{C_n\}$
evolves according to the following recursion:
$C_0=0$, and for $n\geq 0$,
\begin{equation}
\label{eq:CUSUM}
C_{n+1} = \max\{0, C_n + \log [f_1(X_{n+1})/f_0(X_{n+1})]\}.
\end{equation}
Thus, in the CuSum algorithm,
the log likelihood ratio of the observations are accumulated over time.
If the statistic $C_n$ goes below $0$, it is reset to $0$. A change is declared
the first time $C_n$ is above a threshold $A>0$:
\[\tauc = \inf\left\{ n\geq 1: C_n > A \right\}.\]
We note that $\PDC(\tauc)=1$.

In \cite{bane-veer-IT-2013}, we proposed the DE-CuSum algorithm that can be used
to detect the change in a data-efficient way.
\begin{algorithm}[$\mathrm{DE-CuSum}$]
\label{algo:DECuSum}
Start with $W_0=0$ and fix $\mu>0$, $A>0$ and $h\geq0$. For $n\geq 0$ use the following control:
\begin{enumerate}
\item Take the first observation.
\item If an observation is taken then update the statistic using
\[ W_{n+1} = (W_n + \log [f_1(X_{n+1})/f_0(X_{n+1})])^{h+},\]
where $(x)^{h+} = \max\{x, -h\}$.
\item If $W_n < 0$, skip the next observation and update the statistic using
\[
W_{n+1} = \min\{W_n + \mu, 0\}.
\]
\item Declare change at
\[\tauw = \inf\left\{ n\geq 1: W_n > A \right\}.\]
\end{enumerate}
\end{algorithm}%\vspace{-0.3cm}
\begin{figure}[htb]
\center
%\vspace{-0.3cm}
\includegraphics[width=9cm, height=5cm]{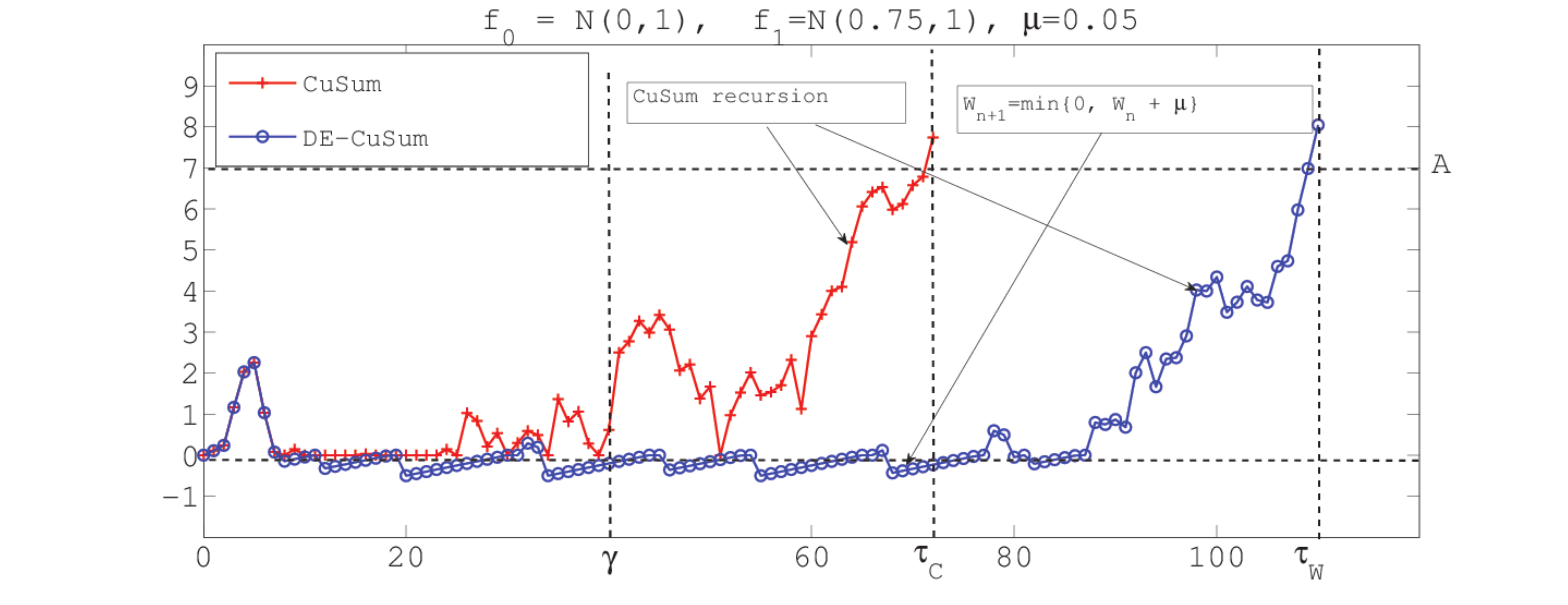}
\caption{{Typical evolution of the CuSum and DE-CuSum statistics evaluated using the same set of samples:
$f_0 \sim {\cal N}(0,1)$, $f_1 \sim {\cal N}(0.75,1)$, $\Gamma=40$, $A=7$,
$\mu=0.05$, and $h=0.5$. Thus, the undershoots are truncated at $-0.5$. Note that $C_n \geq W_n$, $\forall n$.}}
\label{fig:DECuSum_evolution}%\vspace{-0.3cm}
\end{figure}

If $h=0$, the DE-CuSum statistic $W_n$ never becomes negative and hence
reduces to the CuSum statistic and evolves as: $W_0=0$, and for $n\geq 0$,
\[W_{n+1} = \max\{0, W_n + \log [f_1(X_{n+1})/f_0(X_{n+1})]\}.\]
Thus, with $h=0$, the DE-CuSum algorithm reduces to the CuSum algorithm.

The evolution of the DE-CuSum algorithm is plotted in Fig.~\ref{fig:DECuSum_evolution}.
If $h=\infty$, the evolution of the DE-CuSum algorithm can be described as follows.
As seen in Fig.~\ref{fig:DECuSum_evolution}, initially the DE-CuSum statistic
evolves according to the CuSum statistic till the statistic $W_n$ goes below $0$.
Once the statistic goes below $0$, samples are skipped depending
on the undershoot of $W_n$ (this is also the sum of the log likelihood of the observations) and a
pre-designed parameter $\mu$.
Specifically, the statistic is incremented by $\mu$ at each time step, and
samples are skipped till $W_n$ goes above zero,
at which time it is reset to zero. At this point, fresh observations are taken and the
process is repeated till the statistic crosses the threshold $A$, at which time a change is declared.
Thus, the DE-CuSum algorithm can also be seen as a sequence of SPRTs (\cite{wald-wolf-amstat-1948},
\cite{sieg-seq-anal-book-1985}) intercepted by ``sleep'' times controlled by the undershoot
and the parameter $\mu$.
If $h < \infty$, the maximum number of consecutive samples skipped is bounded by $h/\mu + 1$.
This may be desirable in some applications.
The following theorem is proved in \cite{bane-veer-IT-2013} and \cite{bane-veer-isit-2014}.
We define the ladder variable (see \cite{wood-nonlin-ren-th-book-1982}, \cite{sieg-seq-anal-book-1985})
\[\tau_{-}= \inf\left\{n \geq 1: \sum_{k=1}^n \log \frac{f_1(X_k)}{f_0(X_k)} < 0\right\}.\]
We note that $W_{\tau_{-}}$ is the ladder height. Recall that $(x)^{h+} = \max\{x, -h\}$.
%\pagebreak
\begin{theorem}[\cite{bane-veer-IT-2013}, \cite{bane-veer-isit-2014}]
\label{thm:DECuSumOpt}
Let $\Expect_1[\log [f_{1}(X_{1})/f_{0}(X_{1})]]$ and $\Expect_\infty[\log [f_{0}(X_{1})/f_{1}(X_{1})]]$ be finite and positive.
If $\mu > 0$, $h < \infty$, and $A=|\log \alpha|$, we have
\begin{equation}
\label{eq:DECuSumPerf}
\begin{split}
C_n &\geq W_n\; \ \ \ \; \forall n \geq 0,\\
\FAR(\tauw) &\leq \FAR(\tauc) \leq \alpha,\\
\PDC(\tauw) &=\frac{\Expect_\infty[\tau_{-}]}{\Expect_\infty[\tau_{-}] + \Expect_\infty[\lceil | W_{\tau_{-}}^{h+}|/\mu \rceil]},\\
\CADD(\tauw) &\sim \CADD(\tauc) \sim \frac{|\log \alpha|}{D(f_1 \; ||\; f_0)} (1+o(1)) \mbox{ as } \alpha \to 0,\\
\WADD(\tauw) &\sim \WADD(\tauc) \sim \frac{|\log \alpha|}{D(f_1 \; ||\; f_0)} (1+o(1)) \mbox{ as } \alpha \to 0.
\end{split}
\end{equation}
If $h=\infty$, then
\begin{equation}
\label{eq:PDCApprox}
\PDC(\tauw) \leq \frac{\mu}{\mu+D( f_0 \; ||\; f_1)}.
\end{equation}
\end{theorem}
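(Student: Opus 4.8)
The plan is to establish the five relations in~\eqref{eq:DECuSumPerf} in the order listed, since the earlier ones feed the later ones, and then to derive the $h=\infty$ bound~\eqref{eq:PDCApprox} by a short Wald computation.

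First I would prove the pathwise domination $C_n \ge W_n$ by induction on $n$, writing $Z_{n+1}=\log[f_1(X_{n+1})/f_0(X_{n+1})]$ and using that the CuSum statistic satisfies $C_n\ge 0$ for all $n$. If $W_n\ge 0$, a sample is taken, and since $W_n\le C_n$ and $-h\le 0$ we get $W_{n+1}=\max\{W_n+Z_{n+1},\,-h\}\le\max\{0,\,C_n+Z_{n+1}\}=C_{n+1}$. If $W_n<0$, a sample is skipped, so $W_{n+1}=\min\{W_n+\mu,0\}\le 0\le C_{n+1}$. Hence $C_n\ge W_n$ for all $n$, so $\{W_n>A\}\subseteq\{C_n>A\}$ and therefore $\tauc\le\tauw$ pathwise. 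This gives $\Expect_\infty[\tauw]\ge\Expect_\infty[\tauc]$, i.e. $\FAR(\tauw)\le\FAR(\tauc)$, and the bound $\FAR(\tauc)\le\alpha$ at $A=|\log\alpha|$ is the classical CuSum false-alarm estimate.

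For the exact $\PDC$ formula I would use the regenerative structure of $\{W_n\}$ under $\Prob_\infty$: the statistic regenerates each time it is reset to $0$ at the end of a sleep period. In one cycle the active phase lasts $\tau_-$ steps (during which samples are taken and $W_n$ coincides with the random walk $\sum_{k\le n}Z_k$ until it first drops below $0$), and the sleep phase lasts $\lceil|W_{\tau_-}^{h+}|/\mu\rceil$ steps (during which samples are skipped, the statistic climbing from the truncated undershoot $W_{\tau_-}^{h+}\in[-h,0)$ back to $0$ in increments of $\mu$). Since $\Expect_\infty[Z_1]=-D(f_0||f_1)<0$, the walk has negative drift and $\Expect_\infty[\tau_-]<\infty$, so the cycle length is integrable. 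Applying the renewal--reward theorem with reward equal to the number of samples taken per cycle yields $\PDC(\tauw)$ as the ratio of the expected per-cycle samples taken to the expected cycle length, which is exactly the stated expression.

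The delay asymptotics I would obtain by sandwiching. The lower bounds for $\CADD(\tauw)$ and $\WADD(\tauw)$ follow from Theorem~\ref{thm:Sensor_LB} specialised to $L=1$, since $\tauw$ meets the $\FAR$ constraint and $\WADD\ge\CADD$. For the upper bound it suffices to control $\WADD(\tauw)$. The key point is that $h<\infty$ caps each undershoot depth at $h$ and each sleep at $\lceil h/\mu\rceil$ steps; after the change the active drift is $+D(f_1||f_0)>0$, so from the worst-case configuration the statistic reaches $A$ in about $A/D(f_1||f_0)$ active steps by Wald's identity, while the total time lost to sleeping stays $o(A)$. Carefully quantifying this overhead --- showing that the most adverse initialisation (statistic pinned near $-h$ or mid-sleep) inflates the delay only by a term that is $o(|\log\alpha|)$ --- is the main obstacle, and is exactly where the finiteness of $h$ is used; with $A=|\log\alpha|$ this yields the matching upper bound and hence both asymptotic equivalences.

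Finally, for~\eqref{eq:PDCApprox} with $h=\infty$ the undershoot is untruncated, so I would start from the exact $\PDC$ formula with $W_{\tau_-}^{h+}$ replaced by $W_{\tau_-}$, bound $\Expect_\infty[\lceil|W_{\tau_-}|/\mu\rceil]\ge\Expect_\infty[|W_{\tau_-}|]/\mu$ using $\lceil x\rceil\ge x$, and apply Wald's identity to the stopped walk: $\Expect_\infty[W_{\tau_-}]=\Expect_\infty[\tau_-]\,\Expect_\infty[Z_1]=-D(f_0||f_1)\,\Expect_\infty[\tau_-]$. Substituting $\Expect_\infty[|W_{\tau_-}|]=D(f_0||f_1)\,\Expect_\infty[\tau_-]$ makes $\Expect_\infty[\tau_-]$ cancel in the ratio and gives $\PDC(\tauw)\le\mu/(\mu+D(f_0||f_1))$ directly.
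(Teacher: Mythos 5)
Your outline is correct and follows the same route the paper relies on: note that this theorem is stated here without proof (it is imported from \cite{bane-veer-IT-2013} and \cite{bane-veer-isit-2014}), but every ingredient you use --- the induction giving $C_n\ge W_n$ and hence $\tauw\ge\tauc$ and the $\FAR$ bound, the renewal--reward identity for $\PDC$, and the Wald cancellation for the $h=\infty$ bound --- is exactly what the paper invokes in its discussion and reuses in the proof of Theorem~\ref{thm:DEALL} and Lemma~\ref{lem:lem1}. The only place you stop short is the $\WADD$ upper bound, where you assert the sleep overhead is $o(A)$; the paper's corresponding argument (Section~\ref{sec:DEALL_Proof}) makes this precise and stronger by showing the overhead is $O(1)$: the number of sojourns below zero after the change is stochastically dominated by a geometric variable with success probability $q=\Prob_1(\sum_{k\le n}\log[f_1(X_k)/f_0(X_k)]\ge 0\;\forall n)>0$, each sleep lasts at most $\lceil h/\mu\rceil$ steps, and the active time equals the CuSum passage time in distribution, which is where $h<\infty$ enters exactly as you identified.
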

\medskip

Thus, according to the above theorem, the $\FAR$ of the DE-CuSum algorithm is at least as good as that of the CuSum algorithm.
This is because $C_n \geq W_n$, and hence the DE-CuSum statistic will cross the threshold $A$ only after the CuSum statistic has crossed it.
As a result, we can use the same threshold $A$ that we would use in the CuSum algorithm, to satisfy a given constraint on the $\FAR$.
We also note from the expression or the bound on $\PDC$ that it can be made smaller by choosing a smaller $\mu$. So, for any given $\beta$, a suitable
$\mu$ can be chosen to satisfy the $\PDC$ constraint. Also note that $\PDC$ is not a function of the threshold $A$, and hence
the $\PDC$ constraint can be satisfied independent of the constraint on $\FAR$.
Also,
the $\CADD$ and the $\WADD$ of the two algorithms are asymptotically the same (we have actually shown that the delays are within a constant of each other).

These statements together imply that the DE-CuSum algorithm
is asymptotically optimal for Problem~\ref{prob:DELorden} and Problem~\ref{prob:DEPollak}, for each fixed $\beta$, as $\alpha \to 0$.
The fact that the $\PDC$ is not a function of the threshold $A$ is crucial to the proof. In general for a policy $\Psi$, the $\PDC$ can change
as we change the false alarm rate. As a result, as $\alpha \to 0$, the constraint $\beta$ on the $\PDC$ can be violated.
Also, the assumption that $h < \infty$ is an important assumption in the theorem. If $h =\infty$, then the undershoot of the DE-CuSum statistic
can be large. This can cause delay to grow to infinity as we consider the average of worst possible realizations.

To summarize, the DE-CuSum algorithm is a likelihood ratio based adaptive sampling strategy introduced in the CuSum algorithm. 
And in the above theorem it is shown that such a sampling strategy can be added to the CuSum algorithm without any loss in asymptotic performance.
In the next section we use this sampling strategy locally at each sensor.

\section{The $\text{DE-All}$ Algorithm}
\label{sec:DEALL_Algo}
We now propose the main algorithm of this paper, the DE-All algorithm.
In the DE-All algorithm, the DE-CuSum algorithm (see Algorithm~\ref{algo:DECuSum})
is used at each sensor,
 and a ``1'' is transmitted
each time the DE-CuSum statistic at any sensor is above a threshold. A change is declared the first
time a ``1'' is received at the fusion center
 from \textit{all} the sensors at the same time.

We use $W_{n,\ell}$ to denote the DE-CuSum statistic at sensor $\ell$.
Recall that
\[d_\ell = \frac{D(f_{1,\ell} \; || \; f_{0,\ell})}{\sum_{k=1}^L D(f_{1,k} \; || \; f_{0,k})}.\]
\begin{algorithm}[$\mathrm{DE-All}$]
\label{algo:DE-All}
Start with $W_{0,\ell}=0$ $\forall \ell$. Fix $\mu_\ell > 0$, $h_\ell\geq0$, and  $A \geq 0$.
For $n\geq 0$ use the following control:
\begin{enumerate}
\item Use the DE-CuSum algorithm at each sensor $\ell$, i.e., update the statistics $\{W_{n,\ell}\}_{\ell=1}^L$
for $n\geq 1$ using
\begin{equation*}
\begin{split}
S_{n+1, \ell} &= 1 \text{~only if~} W_{n,\ell} \geq 0 \\
W_{n+1,\ell} &= \min\{W_{n,\ell} + \mu_\ell, 0\} \; \text{~if~} S_{n+1,\ell} = 0\\
             &=\left(W_{n,\ell} + \log \frac{f_{1,\ell}(X_{n+1,\ell})}{f_{0,\ell}(X_{n+1,\ell})}\right)^{h+} \; \text{~if~} S_{n+1} = 1\\
\end{split}
\end{equation*}
where $(x)^{h+} = \max\{x, -h\}$.
\item Transmit
\[
Y_{n,\ell} = \indic_{\{W_{n,\ell} > d_\ell A\}}.
\]
\item At the fusion center stop at
\[
\taudeall = \inf\{n \geq 1: Y_{n,\ell}=1 \mbox{ for all } \ell \in \{1, \cdots, L\}\}.
\]
\end{enumerate}
\end{algorithm}
If $h_\ell=0$ $\forall \ell$ then the DE-CuSum algorithm used at each sensor
reduces to the CuSum algorithm. Hence, the $\mathrm{DE-All}$ algorithm reduces to the ALL algorithm;
see Algorithm~\ref{algo:ALL}.

\section{Asymptotic Optimality of the DE-All Algorithm}
In this section we prove the asymptotic optimality of the DE-All algorithm proposed in the previous section.

We define the ladder variable (see \cite{wood-nonlin-ren-th-book-1982}, \cite{sieg-seq-anal-book-1985}) corresponding to sensor $\ell$:
\[
\tau_{\ell-} = \inf\left\{n \geq 1: \sum_{k=1}^n \log \frac{f_{1,\ell}(X_{k,\ell})}{f_{0,\ell}(X_{k,\ell})} < 0\right\}.\]
We note that $W_{\tau_{\ell-}}$ is the ladder height.
\begin{theorem}\label{thm:DEALL}
Let moments of up to third order for the K-L divergences at each sensor be finite and positive.
Let $\mu_\ell > 0$, $h_\ell < \infty$, $\forall \ell$, and $A=|\log \alpha|$. Then we have
\begin{equation}
\label{eq:DEAllPerf}
\begin{split}
\FAR(\Pideall) &\leq \FAR(\Piall) = \alpha(1+o(1)), \mbox{ as } \alpha \to 0,\\
\PDC_\ell(\Pideall) &=\frac{\Expect_\infty[\tau_{\ell-}]}{\Expect_\infty[\tau_{\ell-}] +
\Expect_\infty[\lceil | W_{\tau_{\ell-}}^{h_\ell+}|/\mu_\ell \rceil]},\\
\WADD(\Pideall) &= \frac{|\log \alpha|}{\sum_{\ell=1}^L D(f_{1,\ell} \; || \; f_{0,\ell})}  (1+o(1)) \mbox{ as } \alpha \to 0.
\end{split}
\end{equation}
If $h_\ell=\infty$, $\forall \ell$, then
\begin{equation}
\label{eq:PDCellApprox}
\PDC_\ell(\Pideall) \leq \frac{\mu_\ell}{\mu_\ell+D( f_{0,\ell} \; ||\; f_{1,\ell})} \; \forall \ell.
\end{equation}
\end{theorem}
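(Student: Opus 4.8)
The plan is to prove each of the stated assertions separately, in every case reducing to the single-sensor analysis of Theorem~\ref{thm:DECuSumOpt} and the ALL analysis of Theorem~\ref{thm:AllOpt}, and exploiting two structural facts: the sensors are independent, and at each sensor the pair $(\{S_{n,\ell}\},\{W_{n,\ell}\})$ is governed by a stand-alone DE-CuSum recursion driven only by the local observations $\{X_{n,\ell}\}$. The two $\PDC_\ell$ statements are then immediate. Since the sampling indicators $\{S_{k,\ell}\}$ at sensor $\ell$ depend only on the local statistic $\{W_{n,\ell}\}$, whose law under $\Prob_\infty$ is identical to that of a single-sensor DE-CuSum with parameters $\mu_\ell$ and $h_\ell$, and since $\PDC_\ell$ in \eqref{eq:PDC_l} is a functional of these indicators under $\Prob_\infty$ alone---not of the threshold $d_\ell A$ nor of the other sensors---the exact formula and the $h_\ell=\infty$ bound transfer verbatim from Theorem~\ref{thm:DECuSumOpt} with the subscript $\ell$ appended.

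For the $\FAR$ bound I would use the pathwise domination $W_{n,\ell}\le C_{n,\ell}$ for every $n$ and $\ell$, which holds sensor-by-sensor by Theorem~\ref{thm:DECuSumOpt}, since sensor $\ell$ sees the same observation sequence regardless of its skips. Hence $\{W_{n,\ell}>d_\ell A\ \forall\ell\}\subseteq\{C_{n,\ell}>d_\ell A\ \forall\ell\}$, so $\taudeall\ge\tauall$ pathwise, giving $\Expect_\infty[\taudeall]\ge\Expect_\infty[\tauall]$ and therefore $\FAR(\Pideall)\le\FAR(\Piall)$. That $\FAR(\Piall)=\alpha(1+o(1))$ for $A=|\log\alpha|$ is quoted directly from Theorem~\ref{thm:AllOpt}.

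The substantive claim is the $\WADD$ upper bound; since \eqref{eq:LBWADD} already furnishes the matching lower bound, it suffices to show $\WADD(\Pideall)\le |\log\alpha|/\sum_{\ell=1}^L D(f_{1,\ell}\;||\;f_{0,\ell})\,(1+o(1))$, writing $D_\ell=D(f_{1,\ell}\;||\;f_{0,\ell})$. I would bound $\taudeall-\gamma$, conditioned on the worst-case pre-change history $\boldsymbol{\mathcal{I}}_{\gamma-1}$, by the first time after $\gamma$ at which all sensors are simultaneously above their thresholds, and control it in three pieces. (i) For each $\ell$, the post-change time for the local DE-CuSum to reach $d_\ell A$ is $d_\ell A/D_\ell\,(1+o(1))=A/\sum_{k}D_k\,(1+o(1))$; this is precisely the single-sensor DE-CuSum delay of Theorem~\ref{thm:DECuSumOpt}, the key point being that skipping inflates the delay by only a bounded additive amount, since a resting statistic sits at worst at $-h_\ell$ and climbs back to $0$ within $\lceil h_\ell/\mu_\ell\rceil+1$ skipped steps. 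Because the weights $d_\ell$ equalize these crossing times, their maximum over $\ell$ is still $A/\sum_k D_k\,(1+o(1))$. (ii) The $\esssup$ over $\boldsymbol{\mathcal{I}}_{\gamma-1}$ costs only an $O(1)$ additive delay, because the worst admissible initial value of $W_{\gamma-1,\ell}$ is bounded below by $-h_\ell$; this is exactly where the hypothesis $h_\ell<\infty$ is essential, and the $O(1)$ is absorbed into the $o(1)\cdot A$ term.

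The remaining and main obstacle is (iii): showing that requiring all sensors to be above threshold at the \emph{same} instant does not inflate the leading order. Following \cite{mei-ieeetit-2005}, this rests on the strictly positive post-change drift $D_\ell>0$, which keeps each statistic above a high threshold for a span far exceeding the $o(A)$ spread between the individual crossing times, so that a common overlap window exists with high probability; moreover, once a statistic exceeds $d_\ell A>0$ it is in the sampling (non-skipping) regime, so the DE-CuSum skips do not interfere with this staying-above behavior, and Mei's overshoot/coincidence estimates carry over. Assembling (i)--(iii) with $A=|\log\alpha|$, and combining the resulting upper bound with the lower bound \eqref{eq:LBWADD}, yields the stated asymptotic equality.
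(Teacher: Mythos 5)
Your treatment of the $\FAR$ and $\PDC_\ell$ claims matches the paper's: the pathwise domination $W_{n,\ell}\le C_{n,\ell}$ gives $\taudeall\ge\tauall$ and hence the $\FAR$ bound, and the locality of the sampling control at each sensor lets the single-sensor $\PDC$ formula and the $h_\ell=\infty$ bound transfer verbatim. Your plan for the $\WADD$ upper bound also follows the paper's architecture: condition on the worst pre-change history, decompose each sensor's passage time to $d_\ell A$ into sojourns above zero plus skip periods, bound the total skip contribution by a constant using $h_\ell<\infty$ (each skip burst lasts at most $\lceil h_\ell/\mu_\ell\rceil$ steps and the expected number of bursts is at most $1/q_\ell$, where $q_\ell$ is the probability that the post-change random walk never goes negative), and reduce the maximum over $\ell$ of the remaining sojourn sums to Mei's estimate that $\Expect_1\left[\max_{1\le\ell\le L}\taurl(0,d_\ell A)\right]$ is of order $A/\sum_{\ell} D(f_{1,\ell}\,||\,f_{0,\ell}) + O(\sqrt A)$.

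The genuine gap is in your step (iii), the passage from individual threshold crossings to the simultaneous crossing that $\taudeall$ requires. First, ``a common overlap window exists with high probability'' is a statement about probabilities, whereas what must be bounded is the expectation $\Expect_\gamma[(\tau_a-\gamma)^+\mid\boldsymbol{\mathcal{I}}_{\gamma-1}]$; you would still need to control the expected extra delay on the complementary event, uniformly in $\gamma$ and the conditioning. Second, your claim that once $W_{n,\ell}>d_\ell A$ the statistic is in the non-skipping regime and so ``skips do not interfere'' is not sufficient: after crossing $d_\ell A$ the statistic can decrease, fall below $0$, and re-enter the skipping mode, so Mei's coincidence estimates for CuSum statistics (which never go negative) do not carry over for free. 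The paper closes exactly this gap with a dedicated device: it bounds the time to simultaneity by $\max_{1\le\ell\le L}\nuwl\bigl(W_{\tauwl(w_\ell,d_\ell A)}\bigr)$, the \emph{last exit time} of the DE-CuSum statistic below the level attained at its first crossing, and proves in Lemma~\ref{lem:lem1}, via a second cycle decomposition comparing $\nuwl$ to its CuSum and random-walk counterparts, that $\Expect_1[\nuwl(\cdot)]$ is bounded by a constant independent of $w_\ell$, $d_\ell$ and $A$ (using finiteness of the variance of the log-likelihood ratio). Some such argument in expectation is indispensable; without it the leading-order upper bound on $\WADD(\Pideall)$ is not established.
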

\begin{proof}
The $\FAR$ result follows from the $\FAR$ result in Theorem~\ref{thm:DECuSumOpt} and the $\FAR$ result of $\Piall$ from
\cite{mei-ieeetit-2005}, because $C_{n,\ell} \geq W_{n,\ell}$, $\forall n, \ell$.
The results on $\PDC_\ell$ follows from renewal reward theorem and is identical to that on $\PDC$ in Theorem~\ref{thm:DECuSumOpt}.
The proof of the $\WADD$ is more involved and is based on the properties of the DE-CuSum algorithm.
The proof is provided in Section~\ref{sec:DEALL_Proof}.
\end{proof}
Since $\CADD \leq \WADD$, we also have under the same assumptions as above
\begin{equation}\label{eq:DEAll_CADDUB}
\begin{split}
\CADD(\Pideall) \leq \frac{|\log \alpha|}{\sum_{\ell=1}^L D(f_{1,\ell} \; || \; f_{0,\ell})}  (1+o(1)) \mbox{ as } \alpha \to 0.
\end{split}
\end{equation}

The above results prove that
the $\mathrm{DE-All}$ algorithm is asymptotically optimal for both Problem~\ref{prob:DE-CENSORLorden}
and Problem~\ref{prob:DE-CENSORPollak}, for each given
$\{\beta_\ell\}$, as $\alpha \to 0$.
This is because the $\WADD$ of $\Pideall$ is asymptotically equal to the lower bound provided in
From Theorem~\ref{thm:Sensor_LB}, as $\alpha \to 0$, and the $\PDC_\ell$ is not a function of threshold $A$. Hence,
the $\PDC_\ell$ constraints can be satisfied
independent of the $\FAR$ constraint $\alpha$.

\section{Numerical Results}
In Fig.~\ref{fig:CompPlot} we compare the $\CADD$ performance as a function of the $\FAR$,
of the ALL scheme, the DE-All algorithm, and the fractional sampling scheme. 
In the fractional sampling scheme, the ALL scheme is used and to meet the constraint on $\PDC_\ell$, 
samples are skipped randomly locally at each sensor. 

The parameters used in the simulations are: $L=10$,
$f_0 = f_{0,\ell}=\mathcal{N}(0,1)$, $\forall \ell$, and $f_1=f_{1,\ell}=\mathcal{N}(0.4,1)$, $\forall \ell$.
The values of $\mu = \mu_\ell = 0.2$, and $h=h_\ell=20$ are used to satisfy a $\PDC_\ell$ constraint of $0.65$ for each $\ell$. 

As shown in the figure the DE-All algorithm provides significant gain as compared to the fractional sampling scheme. 
In general, the gap in performance between the DE-All scheme and the fractional sampling scheme increases as the 
Kullback-Leibler divergence between the pre- and post-change distributions increases.

\label{sec:numerical}
\begin{figure}[htb]
\center
%\vspace{-0.3cm}
\includegraphics[width=10.5cm, height=6.5cm]{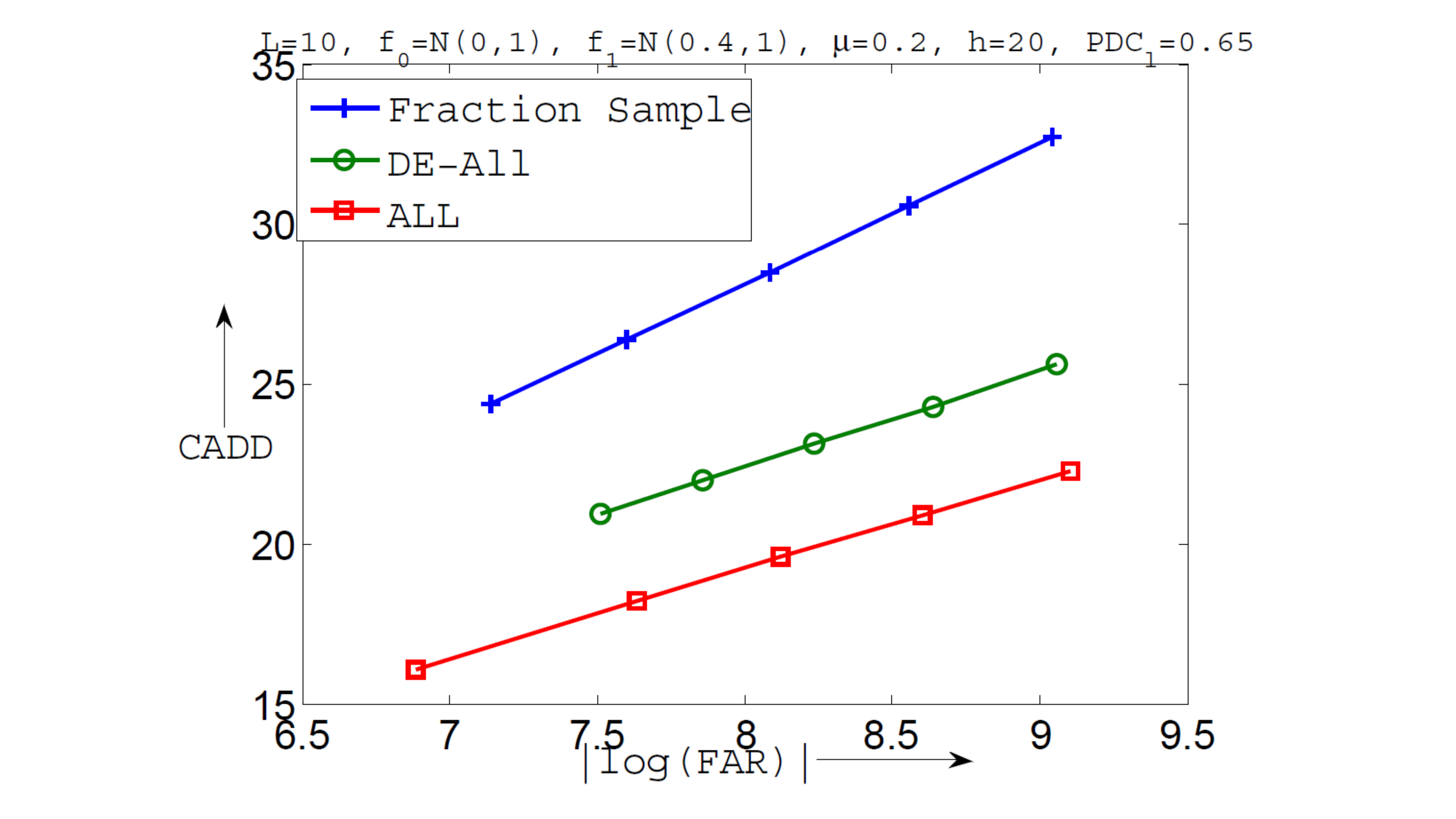}
\caption{{\footnotesize Trade-off curves for the algorithms studied: $L=10$,
$f_0 = f_{0,\ell}=\mathcal{N}(0,1)$, $\forall \ell$, and $f_1=f_{1,\ell}=\mathcal{N}(0.4,1)$, $\forall \ell$.
The values of $\mu = \mu_\ell = 0.2$, and $h=h_\ell=20$ are used to satisfy a $\PDC_\ell$ constraint of $0.65$ for each $\ell$. .}}
\label{fig:CompPlot}
\end{figure}

\section{Proof of Theorem~\ref{thm:DEALL}}
\label{sec:DEALL_Proof}
We first define some quantities and set the notation to be used in the proof of Theorem~\ref{thm:DEALL}.
Let $\tauwl(x,y)$ be the time taken for the DE-CuSum statistic at sensor $\ell$ to reach $y$,
starting at $W_{0,\ell}=x$.
Formally, for $x<y$ let
\[\tauwl(x,y) = \inf\{n \geq 1: W_{n,\ell} > y;  W_{0,\ell}=x\}.\]
If $x \geq y$, then $\tauwl(x,y)=0$.
Similarly, define
\[\taucl(x,y) = \inf\{n \geq 1: C_{n,\ell} > y;  C_{0,\ell}=x\},\]
where $C_{n,\ell}$ is the CuSum statistic at sensor $\ell$ when a CuSum algorithm is employed
at sensor $\ell$.
Also define the corresponding time for a random walk to move from $x$ to $y$:
\[\taurl(x,y) = \inf\{n \geq 1: x+\sum_{k=1}^n \log \frac{f_{1,\ell}(X_{k,\ell})}{f_{0,\ell}(X_{k,\ell})} > y\}.\]

Let $\nuwl(y)$ be the last time below
$y$ for the DE-CuSum statistic,
i.e., for $y \geq 0$,
\[\nuwl(y) = \sup\{n \geq 1: W_{n,\ell} \leq y;  W_{0,\ell}=y\}.\]
Similarly define, the last exit times for the CuSum algorithm
\[\nucl(y) = \sup\{n \geq 1: C_{n,\ell} \leq y;  C_{0,\ell}=y\},\]
and for the random walk
\[\nurl = \sup\{n \geq 1: \sum_{k=1}^n \log \frac{f_{1,\ell}(X_{k,\ell})}{f_{0,\ell}(X_{k,\ell})} \leq 0\}.\]

For simplicity we refer to the stopping for the DE-All algorithm simply by $\tau_a$.

\begin{proof}[Proof of Theorem~\ref{thm:DEALL}]
Our proof follows the outline of the proof of Theorem 3 in \cite{mei-ieeetit-2005}, but the
details here are slightly more involved.

We obtain an upper bound on $\Expect_\gamma \left[ (\tau_a-\gamma)^+ | \boldsymbol{\mathcal{I}}_{\gamma-1} \right]$
that is not a function of $\gamma$ and the conditioning $\boldsymbol{\mathcal{I}}_{\gamma-1}$, and that scales as the lower bound in Theorem~\ref{thm:Sensor_LB}. The theorem is then established if we then take the essential supremum and then
the supremum over $\gamma$.

Let $\boldsymbol{\mathcal{I}}_{\gamma-1}=\boldsymbol{i}_{\gamma-1}$ be such that
$W_{\gamma-1, \ell}=w_\ell$, $w_\ell \in [-h_\ell, \infty)$, $\forall \ell$.
We first note that
\begin{equation}
\begin{split}
\Expect_\gamma  \left[ (\tau_a-\gamma)^+ |  \boldsymbol{\mathcal{I}}_{\gamma-1}=\boldsymbol{i}_{\gamma-1} \right]
\leq \Expect_1\left[\max_{1\leq \ell \leq L} \{ \tauwl(w_\ell, d_\ell A) + \nuwl (W_{\tauwl(w_\ell, d_\ell A)})\}\right].
\end{split}
\end{equation}
%where
%\[y_\ell = W_{n-1+\tauwl(w_\ell, d_\ell D)}.\]
By definition $W_{\tauwl(w_\ell, d_\ell A)} \geq d_\ell A$.
See Fig.~\ref{fig:DECuSUmPassagetimePlot1} for a typical evolution of the DE-CuSum statistic
at a sensor $\ell$, showing the first passage time $\tauwl(w_\ell, d_\ell D)$,
and the last exit time $\nuwl(y_\ell)$, where $y_\ell:=W_{\tauwl(w_\ell, d_\ell A)}$ for simplicity of
representation.
\begin{figure}[htb]
\center
%\vspace{-0.3cm}
\includegraphics[width=9cm, height=5cm]{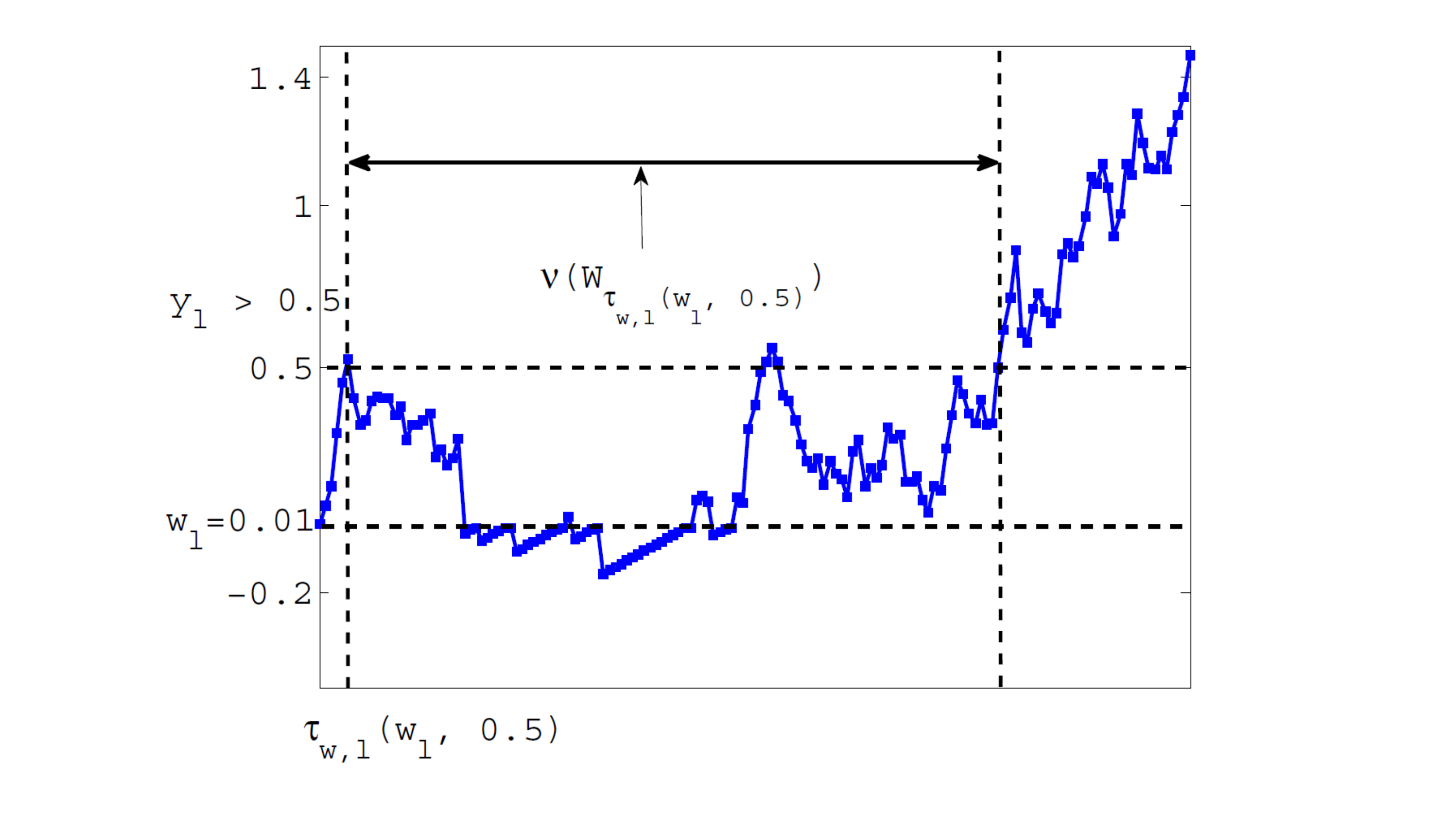}
\caption{Typical evolution of the DE-CuSum algorithm showing the first passage
time $\tauwl$, and the last exit time $\nuwl$ with $w_\ell=0.01$, $d_\ell A=0.5$}
\label{fig:DECuSUmPassagetimePlot1}%\vspace{-0.4cm}
\end{figure}

It is easy to see that
\begin{equation}\label{eq:Appdeq1}
\begin{split}
\Expect_\gamma  \left[ (\tau_a-\gamma)^+ |  \boldsymbol{\mathcal{I}}_{\gamma-1}=\boldsymbol{i}_{\gamma-1} \right]
\leq  \; \Expect_1\left[\max_{1\leq \ell \leq L} \{ \tauwl(w_\ell, d_\ell A) \}\right]
+ \Expect_1\left[\max_{1\leq \ell \leq L} \{ \nuwl (W_{\tauwl(w_\ell, d_\ell A)})\}\right].
\end{split}
\end{equation}
We now show that the second term on the right hand side of \eqref{eq:Appdeq1}
is bounded by a constant, and the first term on the right hand side of \eqref{eq:Appdeq1}
is  $\frac{A}{\sum_{\ell=1}^L D(f_{1,\ell} \; || \; f_{0,\ell})} + O(\sqrt{A})$.

For the second term, from Lemma~\ref{lem:lem1} below, we have
\begin{equation}
\begin{split}
\Expect_1\left[\max_{1\leq \ell \leq L} \{ \nuwl (W_{\tauwl(w_\ell, d_\ell A)})\}\right]\leq
\sum_{\ell=1}^L \Expect_1\left[ \nuwl (W_{\tauwl(w_\ell, d_\ell A)})\right] \leq L K_3,
\end{split}
\end{equation}
where $K_3$ is a constant, not a function of the conditioning $w_\ell$, $d_\ell$, $\forall \ell$, and the threshold $A$.
Thus \eqref{eq:Appdeq1} can be written as
\begin{equation}\label{eq:Appdeq2}
\begin{split}
\Expect_\gamma \left[ (\tau_a-\gamma)^+ |  \boldsymbol{\mathcal{I}}_{\gamma-1}=\boldsymbol{i}_{\gamma-1} \right]
\leq  \; \Expect_1\left[\max_{1\leq \ell \leq L} \{ \tauwl(w_\ell, d_\ell A) \}\right]  + L K_3.
\end{split}
\end{equation}
For the first term on the right, we write the random variable $\tauwl(w_\ell, d_\ell A)$ in terms of
$\taucl(w_\ell, d_\ell A)$. We first assume that $0 \leq w_\ell \leq d_\ell A$.
Note that $\tauwl(w_\ell, d_\ell A)$ is the time for the DE-CuSum statistic $W_{n,\ell}$ to reach $d_\ell A$
starting with $W_{0,\ell} = w_\ell$.
And this time to hit $d_\ell A$ may have multiple sojourns of the statistic $W_{n,\ell}$ below 0.
Thus, the time $\tauwl(w_\ell, d_\ell A)$ can be written as the sum of random times.
Motivated by this we define a set of
new variables. In the following, we often suppress the dependence on the index $\ell$ for simplicity.

Let
\[
\tau_1(w_\ell) = \inf\{n\geq 1: W_{n,\ell} \not \in [0,d_\ell A] \mbox{ with } W_{0,\ell}=w_\ell\}.
\]
This is the first time for the DE-CuSum statistic, starting at $W_{0,\ell}=w_\ell$, to either hit $d_\ell A$
or go below $0$.
On paths over which $W_{\tau_1,\ell}<0$, we know that a number of consecutive samples are skipped
depending on the undershoot of the observations. Let $t_1(w_\ell)$ be the number
of consecutive samples skipped after $\tau_1(w_\ell)$ on
such paths.
On such paths again, let
\[
\tau_2(w_\ell) = \inf\{n> \tau_1(w_\ell)+t_1(w_\ell): W_{n,\ell} \not \in [0,d_\ell A]\}.
\]
Thus, on paths such that $W_{\tau_1,\ell}<0$,
after the times $\tau_1(w_\ell)$ and the number of skipped samples $t_1(w_\ell)$, the statistic $W_{n,\ell}$
reaches $0$
from below. The time $\tau_2(w_\ell)$ is the first time for
$W_{n,\ell}$ to either cross $A$ or go below $0$, after time $\tau_1(w_\ell)+t_1(w_\ell)$.
We define, $t_2(w_\ell)$, $\tau_3(w_\ell)$, etc. similarly. Next let,
\[
N_\ell(w_\ell) = \inf\{k \geq 1: W_{\tau_k,\ell} > d_\ell A\}.
\]

For simplicity we introduce the notion of ``cycles'', ``success'' and ``failure''. With reference to the
definitions of $\tau_k(w_\ell)$'s above, we say that a success has occurred if the statistic $W_{n,\ell}$, starting
with $W_{0,\ell}=w_\ell$, crosses $d_\ell A$ before it goes below $0$.
In that case we also say that the number of cycles to $d_\ell A$ is 1. If on the other hand,
the statistic $W_{n,\ell}$ goes below $0$ before it crosses $d_\ell A$,
we say a failure has occurred. On paths such that $W_{\tau_1,\ell}<0$,
and after the times $\tau_1(w_\ell)$ and the number of skipped samples $t_1(w_\ell)$, the statistic $W_{n,\ell}$
reaches $0$ from below. We say that the number of cycles is 2, if now
the statistic $W_{n,\ell}$ crosses $d_\ell A$ before it goes below $0$.
Thus,
$N_\ell(w_\ell)$ is the number of cycles to success at sensor $\ell$.

Let
\[
q_\ell = \Prob_1\left( \sum_{k=1}^n \log \frac{f_{1,\ell}(X_{k,\ell})}{f_{0,\ell}(X_{k,\ell})} \geq 0,\; \forall n\right).
\]
From \cite{wood-nonlin-ren-th-book-1982} it is well known that $q_\ell > 0$.
We claim that
\begin{equation}\label{eq:N_ell_UB}
\Expect_1[N_\ell(w_\ell)] \leq \frac{1}{q_\ell}.
\end{equation}
Thus, $N_\ell(w_\ell) < \infty$, a.s. $\Prob_1$.

If \eqref{eq:N_ell_UB} is indeed true then we can define
$\lambda_1(w_\ell) = \tau_1(w_\ell)$, $\lambda_2(w_\ell)= \tau_2(w_\ell) - \tau_1(w_\ell) - t_1(w_\ell)$, etc,
to be the lengths of the sojourns of the statistic $W_{n,\ell}$ above $0$.
Then clearly we have
\[
\tauwl(w_\ell, d_\ell A) = \sum_{k=1}^{N_\ell(w_\ell)}\lambda_k(w_\ell) +  \sum_{k=1}^{N_\ell(w_\ell)-1} t_k(w_\ell),
\]

If $w_\ell < 0$, then note that there will be an additional initial sojourn of the statistic
$W_{n,\ell}$ below $0$, equal to $\lceil |w_\ell^{h_\ell+}|/\mu_\ell \rceil$.
This is followed by delay term which corresponds to $w_\ell=0$.
Thus, in this case we can write
\[
\tauwl(w_\ell, d_\ell A) = \sum_{k=1}^{N_\ell(w_\ell)}\lambda_k(w_\ell) +  \sum_{k=1}^{N_\ell(w_\ell)} t_k(w_\ell),
\]
Such a statement is also valid even if $w_\ell > A$ because the right hand side of the above equation is positive.

Substituting this in \eqref{eq:Appdeq2} we have
\begin{equation}\label{eq:Appdeq3}
\begin{split}
\Expect_\gamma & \left[ (\tau_a-\gamma)^+ |  \boldsymbol{\mathcal{I}}_{\gamma-1}=\boldsymbol{i}_{\gamma-1} \right] \\
&\leq  \; \Expect_1\left[\max_{1\leq \ell \leq L} \left\{ \tauwl(w_\ell, d_\ell A) \right\}\right]  + L K_3 \\
&\leq  \; \Expect_1\left[\max_{1\leq \ell \leq L} \left\{ \sum_{k=1}^{N_\ell(w_\ell)}\lambda_k(w_\ell) +  \sum_{k=1}^{N_\ell(w_\ell)} t_k(w_\ell) \right\}\right]  + L K_3 \\
&\leq  \; \Expect_1\left[\max_{1\leq \ell \leq L} \left\{ \sum_{k=1}^{N_\ell(w_\ell)}\lambda_k(w_\ell) \right\} \right] +  \Expect_1\left[\max_{1\leq \ell \leq L} \left\{ \sum_{k=1}^{N_\ell(w_\ell)} t_k(w_\ell) \right\}\right]  + L K_3 \\
&\leq  \; \Expect_1\left[\max_{1\leq \ell \leq L} \left\{ \sum_{k=1}^{N_\ell(w_\ell)}\lambda_k(w_\ell) \right\} \right] +
\sum_{\ell=1}^L  \frac{\lceil h_\ell/\mu_\ell \rceil }{q_\ell}  + L K_3.
\end{split}
\end{equation}
The last inequality is true because
\[
t_k(w_\ell) \leq \lceil h_\ell/\mu_\ell \rceil, \; \forall w_\ell, k, \ell
\]
and because of \eqref{eq:N_ell_UB}.

We now make an important observation. We observe that because of the i.i.d. nature of the observations
\[
\taucl(w_\ell, d_\ell A) \stackrel{d}{=} \sum_{k=1}^{N_\ell(w_\ell)} \lambda_k(w_\ell),
\]
where we have used the symbol $\stackrel{d}{=}$ to denote equality in distribution.
Thus,
\[
\Expect_1\left[\max_{1\leq \ell \leq L} \left\{ \sum_{k=1}^{N_\ell(w_\ell)}\lambda_k(w_\ell) \right\} \right] =
\Expect_1\left[\max_{1\leq \ell \leq L} \taucl(w_\ell, d_\ell A) \right] .
\]
But, by sample-pathwise arguments it follows that
\[
\Expect_1\left[\max_{1\leq \ell \leq L} \taucl(w_\ell, d_\ell A) \right]  \leq
\Expect_1\left[\max_{1\leq \ell \leq L} \taucl(0, d_\ell A) \right].
\]
This gives us
\begin{equation}\label{eq:Appdeq4}
\begin{split}
\Expect_\gamma & \left[ (\tau_a-\gamma)^+ |  \boldsymbol{\mathcal{I}}_{\gamma-1}=\boldsymbol{i}_{\gamma-1} \right] \\
&\leq  \; \Expect_1\left[\max_{1\leq \ell \leq L} \left\{ \sum_{k=1}^{N_\ell(w_\ell)}\lambda_k(w_\ell) \right\} \right] +
\sum_{\ell=1}^L  \frac{\lceil h_\ell/\mu_\ell \rceil }{q_\ell}  + L K_3 \\
&=  \; \Expect_1\left[\max_{1\leq \ell \leq L} \taucl(w_\ell, d_\ell A) \right] +
\sum_{\ell=1}^L  \frac{\lceil h_\ell/\mu_\ell \rceil }{q_\ell}  + L K_3 \\
&\leq  \; \Expect_1\left[\max_{1\leq \ell \leq L} \taucl(0, d_\ell A) \right] +
\sum_{\ell=1}^L  \frac{\lceil h_\ell/\mu_\ell \rceil }{q_\ell}  + L K_3 \\
&\leq  \; \Expect_1\left[\max_{1\leq \ell \leq L} \taurl(0, d_\ell A) \right] +
\sum_{\ell=1}^L  \frac{\lceil h_\ell/\mu_\ell \rceil }{q_\ell}  + L K_3 \\
\end{split}
\end{equation}
We note that the right hand side of the above equation is not a function of $\gamma$ and the conditioning
$ \boldsymbol{\mathcal{I}}_{\gamma-1}=\boldsymbol{i}_{\gamma-1}$ anymore.
The theorem thus follows by taking $\esssup$ on the left hand side followed by a $\sup$ over time index
$\gamma$, and recalling the result from the proof of Theorem 3 of \cite{mei-ieeetit-2005} that $\Expect_1\left[\max_{1\leq \ell \leq L} \taurl(0, d_\ell A) \right]$
is of the order of $\frac{A}{\sum_{\ell=1}^L D(f_{1,\ell} \; || \; f_{0,\ell})} + O(\sqrt{A})$.

The proof of the theorem will be complete if we prove the claim \eqref{eq:N_ell_UB}.

With the identity
\[
\Expect_1[N_\ell(w_\ell)] = \sum_{k=1}^\infty \Prob_1(N_\ell(w_\ell) \geq k)
\]
in mind, and using the terminology of cycles, success and failure defined earlier, we write
\begin{equation*}
\begin{split}
\Prob_1(N_\ell(w_\ell) \geq k) &= \Prob_1(\mbox{fail in 1st cycle}) \; \Prob_1(\mbox{fail in $2^{nd}$ cycle}|\mbox{fail in first cycle})\\
                   &\; \; \cdots \Prob_1(\mbox{fail in $k-1^{st}$ cycle}|\mbox{fail in all previous}).
\end{split}
\end{equation*}
Now,
\begin{equation*}
\begin{split}
\Prob_1(\mbox{fail in $i^{th}$ cycle}|&\mbox{fail in all previous}) \\
&= 1 - \Prob_1(\mbox{success in $i^{th}$ cycle}|\mbox{fail in all previous}).
\end{split}
\end{equation*}

We note that
\begin{equation}\label{eq:ProbSuccLB}
\begin{split}
\Prob_1 &(\mbox{success in $1^{st}$ cycle}) = \Prob_1(W_{\tau_1,\ell}>A)\\
&=\Prob_1(\mbox{Statistic $W_{n,\ell}$ starting with $W_{0,\ell}=w_\ell$ reaches $d_\ell A$ before it goes below $0$})\\
&\geq \Prob_1\left( \sum_{k=1}^n \log \frac{f_{1,\ell}(X_{k,\ell})}{f_{0,\ell}(X_{k,\ell})} \geq 0,\; \forall n\right)=q_\ell.
\end{split}
\end{equation}
Here, the last inequality follows
because $\sum_{k=1}^n \log \frac{f_{1,\ell}(X_{k,\ell})}{f_{0,\ell}(X_{k,\ell})} \to \infty$ a.s. under $\Prob_1$, and hence
the statistic $W_{n,\ell}$ reaches $d_\ell A$ before actually never coming below $w_\ell$, and hence reaches $d_\ell A$ before going below $0$. Note that
the lower bound is not a function of the starting point $w_\ell$.

Similarly, for the second cycle
\begin{equation*}
\begin{split}
\Prob_1 &(\mbox{success in $2^{nd}$ cycle} | \mbox{failure in first})
 = \Prob_1 \left(W_{\tau_2,\ell}>A|W_{\tau_1,\ell}<0) \right)\\
&=\Prob_1\left(\mbox{Statistic $W_{n,\ell}$, for $n> \tau_1(w_\ell) + t_1(w_\ell)$, reaches $d_\ell A$ before it goes below $0$}\right)\\
&=\Prob_1(\mbox{Statistic $W_{n,\ell}$ starting with $W_{0,\ell}=0$ reaches $d_\ell A$ before it goes below $0$})\\
&\geq \Prob_1\left( \sum_{k=1}^n \log \frac{f_{1,\ell}(X_{k,\ell})}{f_{0,\ell}(X_{k,\ell})} \geq 0,\; \forall n\right)=q_\ell.
\end{split}
\end{equation*}
Almost identical arguments for the other cycles proves that
\[
\Prob_1(\mbox{success in $i^{th}$ cycle}|\mbox{fail in all previous}) \geq q_\ell, \; \forall i.
\]
As a result we get
\[
\Prob_1(N_\ell(w_\ell) \geq k) \leq (1-q_\ell)^{k-1}.
\]
Note that the right hand side is not a function of the initial point $w_\ell$, nor is a function of the threshold $A$.
Hence,
\begin{equation}
\Expect_1[N_\ell(w_\ell)] = \sum_{k=1}^\infty \Prob_1(N_\ell(w_\ell) \geq k) \leq \sum_{k=1}^\infty (1-q_\ell)^{k-1} = \frac{1}{q_\ell} < \infty.
\end{equation}
This proves the claim in \eqref{eq:N_ell_UB} and proves the theorem.
\end{proof}

\begin{lemma}\label{lem:lem1}
Let $\nuwl(w_\ell)$ as defined above be
the last exit time of the DE-CuSum statistic at sensor $\ell$ of the interval $(-\infty, w_\ell]$.
Then if the variance of the log likelihood ratio at sensor $\ell$ is finite,
then for all $w_\ell \geq 0$, and every $\ell$,
\begin{equation}
\begin{split}
\Expect_1 [\nuwl(w_\ell)] & \leq \Expect_1[\nucl(w_\ell)] + K_1 \\
& \leq   \Expect_1[\nurl(w_\ell)] + K_1 = K_3 < \infty,
\end{split}
\end{equation}
where $K_1$ and $K_3$ are finite positive constants.
\end{lemma}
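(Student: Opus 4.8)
The plan is to place the three statistics $W_{n,\ell}$, $C_{n,\ell}$, and the underlying random walk on one probability space fed by the same post-change observations, and to prove the two inequalities in the displayed chain one at a time. Write $\Lambda_{n,\ell} = \sum_{k=1}^n \log \frac{f_{1,\ell}(X_{k,\ell})}{f_{0,\ell}(X_{k,\ell})}$ and $Z_{n,\ell}=\log\frac{f_{1,\ell}(X_{n,\ell})}{f_{0,\ell}(X_{n,\ell})}$. Since the walk is spatially homogeneous, $\nurl(w_\ell)$, the last time the walk started at $w_\ell$ is $\leq w_\ell$, is just the last time $\Lambda_{n,\ell}\leq 0$ and does not depend on $w_\ell$; I will call it $\nurl$. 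Under $\Prob_1$ the increments have positive mean $D(f_{1,\ell} \; || \; f_{0,\ell})$ and finite variance, so $\Lambda_{n,\ell}\to+\infty$ a.s.; the finiteness $\Expect_1[\nurl]<\infty$ is then the standard last-exit estimate from renewal/fluctuation theory (\cite{wood-nonlin-ren-th-book-1982}, \cite{sieg-seq-anal-book-1985}). This sets $K_3 := \Expect_1[\nurl]+K_1$.

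For the inequality $\Expect_1[\nucl(w_\ell)]\leq\Expect_1[\nurl]$, I would start the CuSum at $C_{0,\ell}=w_\ell\geq0$ and iterate $C_{n,\ell}=\max\{0,C_{n-1,\ell}+Z_{n,\ell}\}\geq C_{n-1,\ell}+Z_{n,\ell}$ to obtain $C_{n,\ell}\geq w_\ell+\Lambda_{n,\ell}$ for every $n$. Hence $\{C_{n,\ell}\leq w_\ell\}\subseteq\{\Lambda_{n,\ell}\leq0\}$ on every sample path, so the last exit of $C_{n,\ell}$ from $(-\infty,w_\ell]$ occurs no later than $\nurl$; taking expectations gives the claim.

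The substance is the first inequality $\Expect_1[\nuwl(w_\ell)]\leq\Expect_1[\nucl(w_\ell)]+K_1$. Here I would split $\nuwl(w_\ell)=T_a+T_s$ into the number $T_a$ of observation-taking (``active'') steps and the number $T_s$ of skipped (``sleep'') steps occurring up to the last exit, which is legitimate because, for $w_\ell\geq0$, every sleep value is negative and the statistic must stay above $w_\ell\geq0$ after $\nuwl(w_\ell)$, so all descents and sleeps occur before it. Deleting the sleep stretches and re-indexing, the active evolution of $W_{n,\ell}$ is driven by the same observations as $C_{n,\ell}$ and begins each fresh excursion at $0$ exactly as the CuSum does after its $\max\{0,\cdot\}$ reset; since the two statistics agree whenever both are nonnegative and are simultaneously $\leq w_\ell$ on any step where a descent occurs, the active steps are in bijection with the CuSum steps and $T_a=\nucl(w_\ell)$ under the coupling. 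For the sleep count, each descent truncates the undershoot at $-h_\ell$ and climbs back in steps of $\mu_\ell$, so one sleep stretch lasts at most $\lceil h_\ell/\mu_\ell\rceil$ steps, while the number of descents before the final escape above $w_\ell$ is dominated by a geometric variable because each excursion from $0$ avoids returning below $0$ with probability at least $q_\ell=\Prob_1(\Lambda_{n,\ell}\geq0\ \forall n)>0$ (the same $q_\ell$ as in the proof of Theorem~\ref{thm:DEALL}). This yields $\Expect_1[T_s]\leq\lceil h_\ell/\mu_\ell\rceil/q_\ell=:K_1$, a bound free of $w_\ell$ and $A$, and the inequality follows.

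The hard part will be making the active/sleep decomposition and the excursion coupling rigorous: one must check carefully that deleting the sleep intervals turns the active part of $W_{n,\ell}$ into a faithful copy of $C_{n,\ell}$, in particular handling the single step on which the statistic first goes negative and the $-h_\ell$ floor that the CuSum never sees, and that the geometric domination of the descent count holds uniformly over the starting level $w_\ell\geq0$. The other delicate point is $\Expect_1[\nurl]<\infty$: the crude union bound $\Expect_1[\nurl]\leq\sum_m m\,\Prob_1(\Lambda_{m,\ell}\leq0)$ needs a Baum--Katz (third-moment) input, consistent with the global hypotheses of Theorem~\ref{thm:DEALL}, whereas the sharp renewal statement requires only the finite variance assumed in the lemma.
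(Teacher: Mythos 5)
Your proposal is correct and follows essentially the same route as the paper's proof: decompose $\nuwl(w_\ell)$ into the above-zero excursions (whose total length is identified, via the i.i.d.\ structure, with $\nucl(w_\ell)$) plus the sleep stretches, bound each sleep stretch by $\lceil h_\ell/\mu_\ell\rceil$ and the number of descents by a geometric variable with success probability $q_\ell=\Prob_1(\sum_{k=1}^n \log\frac{f_{1,\ell}(X_{k,\ell})}{f_{0,\ell}(X_{k,\ell})}\geq 0\ \forall n)>0$, yielding the same constant $K_1=\lceil h_\ell/\mu_\ell\rceil/q_\ell$, and then compare the CuSum last exit time to the random-walk last exit time pathwise. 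The only cosmetic difference is that you phrase the active-part identification as a pathwise bijection after re-indexing while the paper states it as an equality in distribution, and you make explicit the domination $C_{n,\ell}\geq w_\ell+\Lambda_{n,\ell}$ that the paper leaves implicit.
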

\begin{proof}
Throughout the proof, we often suppress the dependence on the sensor index $\ell$.
The evolution of the DE-CuSum statistic from $n=1$ till $\nuwl(w_\ell)$
can be described as follows. The DE-CuSum starts at $w_\ell$, and initially
evolves like the CuSum algorithm, till either it goes below $0$,
or grows to $\infty$ without ever coming back to $0$.
Let $\mathcal{A}_1$ be the set of paths such that the DE-CuSum statistic grows to infinity without ever touching $0$.
In Fig.~\ref{fig:DECuSUmPassagetimePlot1}
consider the evolution of the DE-CuSum statistic by considering the time $\tauwl(w_\ell, d_\ell A)$
as the origin or time $n=0$. Then the sample shown in Fig.~\ref{fig:DECuSUmPassagetimePlot1} is a path from the set $\mathcal{A}_1^{c}$, which is the
complement of the set $\mathcal{A}_1$.
We define
\begin{equation}\label{eq:Lem1_eq1}
\begin{split}
\nu_1(w_\ell) &= \sup\{n \geq 1: W_{n,\ell} \leq w_\ell; W_{0,\ell}=w_\ell\} \;\;\; \text{ on } \; \mathcal{A}_1\\
          &= \inf\{n \geq 1: W_{n,\ell} < 0; \} \;\;\; \text{ on } \; \mathcal{A}_1^{c}
\end{split}
\end{equation}
Thus, on the set $\mathcal{A}_1$, $\nu_1(w_\ell)$ is the last exit time for the level $w_\ell$, and on the set $\mathcal{A}_1^{c}$,
$\nu_1(w_\ell)$ is the first time to hit $0$. We note that $\nu_1(w_\ell)$ is not a stopping time.

On the set $\mathcal{A}_1^{c}$, the DE-CuSum statistic goes below 0. Let $t_1(w_\ell)$ be the
time taken for the DE-CuSum statistic to grow up to 0, once it goes below 0 at $\nu_1(w_\ell)$.
Beyond $\nu_1(w_\ell) + t_1(w_\ell)$, the evolution of the DE-CuSum statistic is similar.
Either it grows up to $\infty$ (say on set of paths $\mathcal{A}_2$), or it goes below 0 (say on set of paths $\mathcal{A}_2^{c}$).
Thus, we define the variable
\begin{equation}
\begin{split}
\nu_2(w_\ell) &= \sup\{n > \nu_1(w_\ell) + t_1(w_\ell): W_{n,\ell} \leq w_\ell\} \;\;\; \text{ on } \; \mathcal{A}_2\\
          &= \inf\{n > \nu_1(w_\ell) + t_1(w_\ell): W_{n,\ell} < 0; \} \;\;\; \text{ on } \; \mathcal{A}_2^{c}
\end{split}
\end{equation}
The variables $t_3(w_\ell)$ and $\nu_3(w_\ell)$, etc., can be similarly defined. We note that the variables
here are similar to that used in the proof of Theorem~\ref{thm:DEALL}, but the variables $\nu_k(w_\ell)$s here are not stopping times.

Also, let
\[
N^{\nu}_\ell(w_\ell) = \inf\{ k\geq 1: W_{\nu_k(w_\ell),\ell} \geq 0\}.
\]

As done in the proof of Theorem~\ref{thm:DEALL}, we define the notion of ``cycles'', ``success'' and ``failure''.
With reference to the
definitions of $\nu_k(w_\ell)$'s above, we say that a success has occurred if the statistic $W_{n,\ell}$, starting
with $W_{0,\ell}=w_\ell$, grows to infinity before it goes below $0$.
In that case we also say that the number of cycles to the last exit time is 1. If on the other hand,
the statistic $W_{n,\ell}$ goes below $0$,
we say a failure has occurred. On paths such that $W_{\tau_1,\ell}<0$,
and after the times $\tau_1(w_\ell)$ and the number of skipped samples $t_1(w_\ell)$, the statistic $W_{n,\ell}$
reaches $0$ from below. We say that the number of cycles is 2, if now
the statistic $W_{n,\ell}$ grows to infinity before it goes below $0$.
Thus,
$N_\ell(w_\ell)$ is the number of cycles to success at sensor $\ell$.
See Fig.~\ref{fig:DECuSUmPassagetimePlot1}, where in the figure $N^{\nu}_\ell=7$.

Let
\[
q_\ell = \Prob_1\left( \sum_{k=1}^n \log \frac{f_{1,\ell}(X_{k,\ell})}{f_{0,\ell}(X_{k,\ell})} \geq 0,\; \forall n\right).
\]
We now show that
\[
\Expect_1[N^{\nu}_\ell(w)] \leq \frac{1}{q_\ell} < \infty.
\]
The last strict inequality is true because $q_\ell > 0$; see \cite{wood-nonlin-ren-th-book-1982}.

With the identity
\[
\Expect_1[N^{\nu}_\ell(w_\ell)] = \sum_{k=1}^\infty \Prob_1(N^{\nu}_\ell(w_\ell) \geq k)
\]
in mind, and using the terminology of cycles, success and failure defined above (and which are different
from those used in the proof of Theorem~\ref{thm:DEALL}), we write
\begin{equation*}
\begin{split}
\Prob_1(N^{\nu}_\ell(w_\ell) \geq k) &= \Prob_1(\mbox{fail in $1^{st}$ cycle}) \; \Prob_1(\mbox{fail in $2^{nd}$ cycle}|\mbox{fail in $1^{st}$ cycle})\\
                   &\; \; \cdots \Prob_1(\mbox{fail in $k-1^{st}$ cycle}|\mbox{fail in all previous}).
\end{split}
\end{equation*}
Now,
\begin{equation*}
\begin{split}
\Prob_1(\mbox{fail in $i^{th}$ cycle}|&\mbox{fail in all previous}) \\
&= 1 - \Prob_1(\mbox{success in $i^{th}$ cycle}|\mbox{fail in all previous}).
\end{split}
\end{equation*}

We note that
\begin{equation}
\begin{split}
\Prob_1 &(\mbox{success in $1^{st}$ cycle}) = \Prob_1(W_{\nu_1,\ell}>0)\\
&=\Prob_1(\mbox{Statistic $W_{n,\ell}$ starting with $W_{0,\ell}=w_\ell$ grows to $\infty$ before it goes below $0$})\\
&\geq \Prob_1\left( \sum_{k=1}^n \log \frac{f_{1,\ell}(X_{k,\ell})}{f_{0,\ell}(X_{k,\ell})} \geq 0,\; \forall n\right)=q_\ell.
\end{split}
\end{equation}
Here, the last inequality follows
because $\sum_{k=1}^n \log \frac{f_{1,\ell}(X_{k,\ell})}{f_{0,\ell}(X_{k,\ell})} \to \infty$ a.s. under $\Prob_1$, and hence
the statistic $W_{n,\ell}$ grows to infinity before never coming below $w_\ell \geq 0$. Note that
the lower bound is not a function of the starting point $w_\ell$.

Similarly, for the second cycle
\begin{equation*}
\begin{split}
\Prob_1 &(\mbox{success in $2^{nd}$ cycle} | \mbox{failure in first})
 = \Prob_1 \left(W_{\nu_2,\ell}>0|W_{\nu_1,\ell}<0) \right)\\
&=\Prob_1\left(\mbox{Statistic $W_{n,\ell}$, for $n> \nu_1(w_\ell)+ t_1(w_\ell)$, grows to $\infty$ before it goes below $0$}\right)\\
&=\Prob_1(\mbox{Statistic $W_{n,\ell}$ starting with $W_{0,\ell}=0$ grows to $\infty$ before it goes below $0$})\\
&= \Prob_1\left( \sum_{k=1}^n \log \frac{f_{1,\ell}(X_{k,\ell})}{f_{0,\ell}(X_{k,\ell})} \geq 0,\; \forall n\right)=q_\ell.
\end{split}
\end{equation*}
Almost identical arguments for the other cycles proves that
\[
\Prob_1(\mbox{success in $i^{th}$ cycle}|\mbox{fail in all previous}) \geq q_\ell, \; \forall i.
\]
As a result we get
\[
\Prob_1(N^{\nu}_\ell(w_\ell) \geq k) \leq (1-q_\ell)^{k-1}.
\]
Note that the right hand side is not a function of the initial point $w_\ell$, nor is a function of the threshold $A$.
Hence,
\begin{equation}\label{eq:ENx_UB}
\Expect_1[N^{\nu}_\ell(w_\ell)] = \sum_{k=1}^\infty \Prob_1(N_\ell(w_\ell) \geq k) \leq \sum_{k=1}^\infty (1-q_\ell)^{k-1} = \frac{1}{q_\ell} < \infty.
\end{equation}

Thus, $N^{\nu}_\ell(w_\ell) < \infty$, a.s. under $\Prob_1$ and we can define the following random variables:
$\lambda^{\nu}_1(w_\ell) = \nu_1(w_\ell)$, $\lambda^{\nu}_2(w_\ell)= \nu_2(w_\ell) - \nu_1(w_\ell) - t_1(w_\ell)$, etc,
to be the lengths of the sojourns of the statistic $W_{n,\ell}$ above $0$.
Then, $\nuwl(w_\ell)$ can be written as
\begin{equation}
\begin{split}
\Expect_1 [\nuwl(w_\ell)] = \Expect_1 \left[\sum_{k=1}^{N^{\nu}_\ell(w)} \lambda^{\nu}_1(w_\ell)\right] + \Expect_1 \left[\sum_{k=1}^{N^{\nu}_\ell(w)-1} t_k(w)\right].
\end{split}
\end{equation}
We observe that because of the i.i.d. nature of the observations
\[\Expect_1 [\nucl(w_\ell)] = \Expect_1 \left[\sum_{k=1}^{N^{\nu}_\ell(w)} \lambda^{\nu}_1(w_\ell)\right].\]
As a result,
\begin{equation}
\begin{split}
\Expect_1 [\nuwl(w)] = \Expect_1 [\nucl(w_\ell)] + \Expect_1 \left[\sum_{k=1}^{N^{\nu}_\ell(w)-1} t_k(w)\right].
\end{split}
\end{equation}
Now, $t_k(w) \leq \lceil h_\ell/\mu_\ell \rceil$, for any $k$, $w_\ell$, and every $\ell$.
Thus, we have
\begin{equation}
\begin{split}
\Expect_1 [\nuwl(w)] &\leq \Expect_1 [\nucl(w_\ell)] + \Expect_1 [N^{\nu}_\ell(w)] \lceil h_\ell/\mu_\ell \rceil \\
                     &\leq \Expect_1 [\nucl(w_\ell)] + \frac{\lceil h_\ell/\mu_\ell \rceil }{q_\ell}.
\end{split}
\end{equation}
The first inequality of the lemma follows from by setting $K_1 = \frac{\lceil h_\ell/\mu_\ell \rceil }{q_\ell}$.
The rest of the lemma follows by noting that by definition of the CuSum algorithm
\[\Expect_1 [\nucl(w_\ell)] \leq \Expect_1 [\nurl(w_\ell)],\]
and the latter is finite, and not a function of $w_\ell$, provided the variance of the log likelihood ratio is finite;
see \cite{mei-ieeetit-2005}.
\end{proof}

\section{CONCLUSIONS}
In this paper we proposed the DE-All algorithm, a data-efficient algorithm for sensor networks.
We showed that the proposed algorithm is first-order asymptotically optimal for QCD formulations 
where there is an additional constraint on the cost of observations used before the change point at each sensor. 
The results imply that one can skip an arbitrary but fixed fraction of samples before the change point, 
transmit a binary digit occassionally from the sensors to the fusion center, 
and still perform asymptotically up to the same order as
the Centralized CuSum algorithm. We note that in the latter algorithm, all the samples are used 
at each sensor, and raw observations are transmitted from the sensors to the fusion center at each time slot. 

One can expect better performance if in place of binary digits, more information is transmitted from the sensors
to the fusion center. In this case, one can use better fusion techniques at the fusion center to get improved performance. 
This is indeed true; see \cite{bane-veer-ssp-2012} and \cite{bane-veer-icassp-2013} for some preliminary results.

\section*{ACKNOWLEDGEMENTS}

This research was supported in part by the National Science Foundation (NSF) under grants
CCF 08-30169, CCF 11-11342 and DMS 12-22498 and by the Defense Threat Reduction Agency (DTRA)
under subcontract 147755 at the University of Illinois, Urbana-Champaign from prime award HDTRA1-10-1-0086.

%\newpage

%\vspace{1cm}
%\hspace{-0.6cm}Original Received:%

%\hspace{-0.6cm}Revised:

%\hspace{-0.6cm}Accepted:

\end{document}